\title{The cohomology of framed moduli spaces and the coordinate ring of torus fixed points of quotient singularities }
\author{Kohei Hatano}
\date{}
\newtheorem{prop}{Proposition}[subsection]
\newtheorem{lem}[prop]{Lemma}
\newtheorem{theorem}[prop]{Theorem}
\newtheorem{cor}[prop]{Colollary}
\newtheorem{dfn}[prop]{Definition}
\newtheorem{exa}[prop]{Example}
\newtheorem{conj}[prop]{Conjecture}
\newtheorem{rem}[prop]{Remark}
\begin{document}
\maketitle
\section{Introduction}
Let $G$ be the generalized linear group of rank $n$ on $\mathbb{C}$. The symbols $B$ and $T$ denote  a Borel subgroup and a maximal torus of $G$. We denote the Lie algebra of $G$ and $T$ by $\mathfrak{g}$ and $\mathfrak{t}$. It is well-known, the Springer resolution $\mu: T^*(G/B)\to \mathcal{N}$ is given by 
\[\mu: T^*(G/B)\to \mathcal{N}\]
where $\mathcal{N}$ is the set of nilpotent elements of $\mathfrak{g}$.  The fiber $\mu^{-1}(e)$ is called the Springer fiber for  $e\in \mathcal{N}$. We denote that $\mathcal{O}_{\lambda}$ is a nilpotent orbit corresponding to an $n$ partition $\lambda$. 
\begin{theorem}[\cite{deconcini}\cite{tanisaki}]
Let $\lambda$ be a partition of $n$. For $e\in \mathcal{O}_{\lambda}$, we have the following isomorphism as graded algebras:
\[H^*(\mu^{-1}(e),\mathbb{C}) \simeq \mathbb{C}[\mathfrak{t}\cap \overline{\mathcal{O}_{\lambda^{T}}}]\]
where $\lambda^{T}$ is a transpose of $\lambda$ and $\mathfrak{t}\cap \overline{\mathcal{O}_{\lambda^{T}}}$ is the scheme-theoretic intersection of $\mathfrak{t}$ and $\overline{\mathcal{O}_{\lambda^{T}}}$. 
\end{theorem}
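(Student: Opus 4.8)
The plan is to realize both sides as graded quotients of the polynomial ring $\mathbb{C}[\mathfrak{t}]=\mathbb{C}[x_1,\ldots,x_n]$ and to identify the two defining ideals. I would first handle the left-hand side: the bundle projection $T^*(G/B)\to G/B$ restricts on the fiber to $\mu^{-1}(e)\hookrightarrow T^*(G/B)\to G/B$ and induces $H^*(G/B)\to H^*(\mu^{-1}(e))$, which one checks is surjective using the affine paving of the Springer fiber by its intersections with the Schubert cells. Since $H^*(G/B)\cong\mathbb{C}[x_1,\ldots,x_n]/(e_1,\ldots,e_n)$ is the coinvariant algebra, this yields a presentation $H^*(\mu^{-1}(e))\cong\mathbb{C}[x_1,\ldots,x_n]/I_\lambda$ with $I_\lambda\supseteq(e_1,\ldots,e_n)$, and I would import from \cite{tanisaki} the explicit description of $I_\lambda$ as the ideal generated by the partial elementary symmetric functions $e_r(x_{i_1},\ldots,x_{i_k})$ in the ranges prescribed by $\lambda$.

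Next I would compute the right-hand side. As a scheme-theoretic intersection inside $\mathfrak{g}$, one has $\mathbb{C}[\mathfrak{t}\cap\overline{\mathcal{O}_{\lambda^T}}]=\mathbb{C}[\mathfrak{t}]/J$, where $J$ is the image of the reduced defining ideal of $\overline{\mathcal{O}_{\lambda^T}}$ under the restriction $\mathbb{C}[\mathfrak{g}]\to\mathbb{C}[\mathfrak{t}]$ that sends the off-diagonal matrix coordinates to $0$ and the diagonal ones to $x_1,\ldots,x_n$. For $\mathrm{GL}_n$ this ideal is generated by the positive-degree adjoint invariants $\mathbb{C}[\mathfrak{g}]^G_+$ together with the minors of the matrix powers $N^k$ that encode the rank conditions $\mathrm{rank}(N^k)\le r_k(\lambda^T)$. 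Under restriction the invariants become the full elementary symmetric polynomials $e_1,\ldots,e_n$, while a minor of $N^k$ collapses to the monomial $\prod_{i\in S}x_i^{\,k}$ when its row and column indices coincide in a set $S$ and vanishes otherwise; hence $J$ is generated by $e_1,\ldots,e_n$ together with these monomial products.

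The heart of the argument, and the step I expect to be the main obstacle, is the identification $I_\lambda=J$. Both ideals contain $e_1,\ldots,e_n$, so the claim reduces to a combinatorial equality modulo the symmetric functions: the restricted rank monomials $\prod_{i\in S}x_i^{\,k}$ must generate, together with $e_1,\ldots,e_n$, the same ideal as Tanisaki's partial symmetric functions $e_r(x_S)$. For one inclusion I would use the recursion $e_r(x_S)=x_s\,e_{r-1}(x_{S\setminus s})+e_r(x_{S\setminus s})$ together with the vanishing of the full symmetric functions to rewrite each partial symmetric function through the monomial generators; for the reverse inclusion I would multiply partial symmetric functions by suitable variables to recover the monomial products, just as $x_i^2\equiv-\sum_{j\ne i}x_ix_j$ modulo $(e_1)$. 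The genuine difficulty is numerical bookkeeping: the rank bounds $r_k(\lambda^T)$ and Tanisaki's degree ranges have to be matched box-for-box under the conjugation $\lambda\leftrightarrow\lambda^T$, and this matching is the combinatorial core carried out in \cite{deconcini} and \cite{tanisaki}.

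Finally I would verify that the isomorphism is graded, pairing cohomological degree $2i$ with polynomial degree $i$; since every generator above is homogeneous the induced map is automatically graded, and comparing Hilbert series provides a consistency check. This is confirmed in the two boundary cases: for $\lambda=(1^n)$ both sides are the coinvariant algebra $H^*(G/B)$, and for $\lambda=(n)$ both sides collapse to $\mathbb{C}$.
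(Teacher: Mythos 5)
First, a point of comparison: the paper does not prove this statement at all. It is quoted as background, with the proof deferred entirely to \cite{deconcini} and \cite{tanisaki}. So your attempt can only be judged on its own terms, and while it reproduces the correct skeleton of Tanisaki's argument (present both sides as graded quotients of $\mathbb{C}[\mathfrak{t}]$, match the ideals, check the grading and the boundary cases $\lambda=(1^n)$, $\lambda=(n)$), the load-bearing steps are either missing or circular.

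The first genuine gap is the surjectivity of $H^*(G/B)\to H^*(\mu^{-1}(e))$. An affine paving gives vanishing of odd cohomology and a basis of homology by classes of cell closures, but surjectivity of the restriction map is dual to injectivity of the proper pushforward $H_*(\mu^{-1}(e))\to H_*(G/B)$, and nothing formal forces that. Indeed, Springer fibers admit affine pavings in every Lie type, yet the restriction map fails to be surjective outside type A: already for a subregular nilpotent in type $B_2$ or $G_2$ the fiber is a Dynkin curve whose number of irreducible components, i.e.\ $\dim H^2(\mu^{-1}(e))$, exceeds $\dim H^2(G/B)$. So this step requires genuinely type-A input (Spaltenstein's analysis, Springer theory, or the De Concini--Procesi degeneration); ``one checks it using the paving'' is not a proof and would prove a false statement in general.

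The second gap is circularity. You import from \cite{tanisaki} the identification of the kernel $I_\lambda$ of $\mathbb{C}[x_1,\dots,x_n]\to H^*(\mu^{-1}(e))$ with the ideal of partial elementary symmetric functions, and you also defer the ``combinatorial core'' --- the equality of that ideal with the restriction of the ideal of $\overline{\mathcal{O}_{\lambda^T}}$ --- to \cite{deconcini} and \cite{tanisaki}. Those two imported statements, taken together, \emph{are} the theorem; what remains in your text is only formal glue. Moreover, a step you treat as routine --- that the reduced defining ideal of $\overline{\mathcal{O}_{\lambda^T}}$ is generated by the positive-degree invariants together with the minors of the powers $N^k$ --- is itself a nontrivial theorem: the rank conditions cut out the orbit closure only set-theoretically, and ideal-theoretic generation is hard. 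The published proofs work around exactly this point, e.g.\ by realizing $\mathfrak{t}\cap\overline{\mathcal{O}_{\lambda^T}}$ as a flat limit of reduced intersections of $\mathfrak{t}$ with semisimple conjugacy classes, which is also where the dimension count $n!/\prod_i(\lambda^T_i)!$ comes from. So the outline is a fair reading guide to the literature, but it is not a proof: each of the three hard steps (surjectivity, the presentation of the kernel, the ideal identification) is either absent or cited to the very sources being reproved.
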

In \cite{hikita}, Hikita conjectured this theorem  could be generalizing to two conical symplectic resolutions which are symplectic dual to each other.  
Symplectic duality is advocated in \cite{conical}. It is the duality between two conical symplectic resolutions. In this paper, The symbols $\mathbb{S}$ and $\mathbb{T}$ denote the multiplicative group $\mathbb{C}^*$
\begin{dfn} A smooth symplectic variety $(X,\omega)$ over $\mathbb{C}$ with $\mathbb{S}$ action is said to be conical if and only if it is satisfying the following conditions:
\begin{enumerate}
\item The coordination ring $\mathbb{C}$[X] has the weight decomposition with the non negative weight and $\mathbb{C}[X]_0=\mathbb{C}$.  
\item The projection $X\to X_0=\mathrm{Spec}\:\mathbb{C}[X]$ is projective and birational. 
\item There exists $d\in \mathbb{N}$ with $s^*\omega=s^d\omega$ for any $s\in\mathbb{S}$. 
\end{enumerate}
We call $X\to X_0$ a conical symplectic resolution.
\end{dfn}

The Springer resolution is a conical symplectic resolution. 
Some pairs of conical symplectic resolutions are considered symplectic duality. 
\begin{exa}
\begin{enumerate}
\item[(1)] The resolution $\pi:\mathrm{Hilb}^n(\mathbb{C}^2)\to \mathbb{C}^{2n}/S_n$ is self dual where $\pi$ is the Hilbert-Chow morphism. 

\item[(2)] Let $M_{r,n}$ be the framed moduli space of  pairs $(E,\Phi)$ where $E$ is a torsion free sheaf on $\mathbb{P}^2$ with  $\mathrm{rank}\: E=n$ and $c_2(E)=r$ and $\Phi:E|_{[0,z_1,z_2]}\to \mathcal{O}_{\mathbb{P}^2}^{\oplus n}|_{[0,z_1,z_2]}$ is isomorphism for $[0,z_1,z_2]\in \mathbb{P}^2$.
By \cite[Corollary 10.12]{conical}, the conical symplectic resolution $\mathrm{Hilb}^n(\widetilde{\mathbb{C}^{2}/(\mathbb{Z}/r\mathbb{Z})})\to \mathbb{C}^{2n}/((\mathbb{Z}/r\mathbb{Z})\wr S_n)$ is symplectic dual to the resolution $M_{r,n}\to (M_{r,n})_0$. 
\item[(3)] Let G be a finite subgroup of $\mathrm{SL}_2(\mathbb{C})$. Then the crepant resolution $\pi:\widetilde{\mathbb{C}^2/G}\to\mathbb{C}^2/G $ is symplectic dual to the resolution of the closure of minimal nilpotent orbit in $\mathfrak{g}$ where the Lie algebra $\mathfrak{g}$ is the simply laced simple algebra corresponding to the subgroup $G$. 
\item[(4)] Let $X$ and $X'$ be polarized hyperplane arrangements. If $X$ and $X'$ are Gale dual, associated hypertoric varieties $\mathfrak{M}(X)$ and  $\mathfrak{M}(X')$ are symplectic dual.
\item[(5)] An $A$-type Spalstein variety is symplectic dual to the another  $A$-type Spalstein variety. (\cite{conical})

\end{enumerate}
\end{exa}
We consider  that  The $\mathbb{T}$ on $X$ with commuting $\mathbb{S}$ is defined.  
\begin{conj}
If $X\to X_0$ and $X^!\to X_0^!$ are symplectic dual, the cohomology $H^*(X)$ and the coordinate ring of $\mathbb{T}$-fixed points in $X_0^!$ are isomorphic as graded algebras and vice versa. 
\end{conj}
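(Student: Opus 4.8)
The plan is to establish the isomorphism not uniformly but by computing each graded algebra separately and then matching them, since symplectic duality is not given by a single functor and there is at present no mechanism producing the isomorphism directly from the duality itself. I would concentrate on the concrete dual pairs of the Example, above all the pair $M_{r,n}$ and $\mathrm{Hilb}^n(\widetilde{\mathbb{C}^2/(\mathbb{Z}/r\mathbb{Z})})$ named in the title, and treat the general statement as a conjecture to be verified family by family, using the De Concini--Procesi--Tanisaki theorem as the model case, namely the Springer pair, where both sides are already understood.

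First I would compute the cohomology ring $H^*(X)$. The commuting $\mathbb{S}$- and $\mathbb{T}$-actions make a conical symplectic resolution equivariantly formal, so that $H^*(X)$ is controlled by the $\mathbb{T}$-fixed locus through the Bialynicki-Birula decomposition and localization. For the framed moduli space, which is a Nakajima quiver variety, the fixed points are isolated and indexed by $r$-tuples of Young diagrams of total size $n$; the attracting cells furnish a basis, the cohomological grading is read off from the $\mathbb{S}$-weights on the tangent spaces, and the ring structure is pinned down by the standard presentation in terms of tautological classes, or equivalently by computing in $\mathbb{T}$-equivariant cohomology and specializing.

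Next I would compute the other side, the coordinate ring of the \emph{scheme} of $\mathbb{T}$-fixed points of the dual affine cone, here $\mathbb{C}^{2n}/((\mathbb{Z}/r\mathbb{Z})\wr S_n)$. The essential subtlety, already visible in the quoted theorem where the intersection $\mathfrak{t}\cap\overline{\mathcal{O}_{\lambda^T}}$ is taken scheme-theoretically, is that the fixed locus must be retained together with its nilpotents: I would write the defining ideal of the fixed scheme inside $\mathbb{C}^{2n}$, pass to $((\mathbb{Z}/r\mathbb{Z})\wr S_n)$-invariants, and record the residual $\mathbb{S}$-grading that descends to this ring.

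With both graded algebras described, the final step is to construct the isomorphism, and here the main difficulty lies. The natural candidate sends the tautological classes on $X$ to generators of the fixed-point ring, and matching Hilbert series gives a first numerical check that the graded dimensions agree. The hard part will be to show that this candidate is well defined and bijective as a ring map, that every relation among the cohomology classes lands in the defining ideal of the fixed scheme and conversely, which is exactly the problem of controlling the nilpotent structure on the dual cone. The symmetric statement, obtained by exchanging the roles of the pair, yields an independent consistency check; but I expect no duality-functorial construction of the map to be available, so each family must be handled through its own explicit combinatorics.
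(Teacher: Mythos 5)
The statement you set out to prove is a \emph{conjecture} in the paper, and the paper does not prove it; it is stated as motivation, and the paper's actual result (Theorem \ref{hatano}) is strictly weaker than even the case (2) instance of it: an isomorphism of graded \emph{vector spaces} only, not of graded algebras. Your proposal correctly diagnoses why: the multiplicative structure of the coordinate ring of the $\mathbb{T}$-fixed subscheme of $\mathbb{C}^{2n}/((\mathbb{Z}/r\mathbb{Z})\wr S_n)$ is not understood, so no candidate ring map can be checked against it. But this means your final step --- ``show that this candidate is well defined and bijective as a ring map, that every relation among the cohomology classes lands in the defining ideal of the fixed scheme and conversely'' --- is not a proof step at all; it is precisely the open problem, and you supply no mechanism for carrying it out. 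Equivariant formality, Bialynicki--Birula cells and Hilbert-series comparison only produce graded dimensions; nothing in your plan produces or controls the ring structure on the fixed-scheme side, nor do you address the general statement beyond declaring it should be checked family by family. As a proof of the conjecture the proposal therefore has an essential, unpatchable gap --- the same gap that forces the paper to retreat to vector spaces.

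What your plan does accomplish in outline is exactly the weaker statement the paper proves. The paper takes the Betti numbers of $M_{r,n}$ from Nakajima--Yoshioka \cite{nak} (your step 1), and its real work is your step 2 made precise: it realizes $\mathbb{C}[Z_n]$ as $R_n^{S_n}/I_n^{S_n}$ (Proposition \ref{syou}), passes to the projective limit $S$ with its multisymmetric-function basis $\{m_\Lambda\}$, identifies the fixed-point ideal $J$ as generated by the $m_{(a,b,c)}$ with $a\neq b$ (via Fogarty \cite{fixed}), and then, through the combinatorial Lemmas \ref{lem1}--\ref{5}, extracts an explicit graded basis of $\mathbb{C}[Z_n^{\mathbb{T}}]$ indexed by $r$-tuples of partitions with $|\lambda|+l(\lambda)\leq n$ (Theorem \ref{vec}). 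Comparing this with the fixed-point count on the moduli side is your ``first numerical check,'' and that check is the entire content of the paper's theorem. If you restrict your claim to the graded vector space isomorphism of Theorem \ref{hatano}, your outline is the right strategy, though you would still need to supply the combinatorics of Theorem \ref{vec}, where all the difficulty is concentrated; as a proof of the algebra-level conjecture, it cannot succeed by the listed methods.
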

In \cite[Theorem 1.1, Theorem A.1, Theorem B.1]{hikita}, this conjecture has been proved above the (1), (4) and (5) cases.  In \cite{pavel}, Pavel proved the cohomology ring of the crepant resolution of $\mathbb{C}^2/G$ where $G$ is a finite subgroup of $\mathrm{SL}_2(\mathbb{C})$ is isomorphic to the coordination ring of the fixed points of the minimal nilpotent orbit closure which is the case (3).  

In this paper, we consider the Hikita conjecture in the case (2).
In this case, while the structure of the cohomology rings  $H^*(M_{r,n},\mathbb{C})$ 
and  $H^*(\mathrm{Hilb}^n(\widetilde{\mathbb{C}^{2}/(\mathbb{Z}/r\mathbb{Z})}),\mathbb{C})$ are well known, the structure of the coordinate rings of the fixed are not
yet clean. Hence we compare these objects as graded vector spaces which is the case (2).

Our purpose is to prove the following theorem. 
\begin{theorem}\label{hatano}
We have the following isomorphism as  graded vector spaces:
\[H^*(M_{r,n})\simeq \mathbb{C}[(\mathbb{C}^{2n}/(\mathbb{Z}/r\mathbb{Z}\wr S_n))^{\mathbb{T}}]. \]
where $\mathbb{T}\simeq\mathbb{C^*}$\\

\end{theorem}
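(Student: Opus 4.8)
The plan is to prove the isomorphism by computing the graded dimension of each side and matching the two through a single generating-function identity summed over $n$. Both sides will turn out to be governed by $r$-tuples of partitions of $n$: on the left these index the torus-fixed points of $M_{r,n}$, and on the right I aim to show they index a basis of the coordinate ring of the fixed-point scheme. Throughout I write $A_{r-1}=\mathbb{C}^2/(\mathbb{Z}/r\mathbb{Z})$ for the Kleinian singularity, so that $\mathbb{C}^{2n}/(\mathbb{Z}/r\mathbb{Z}\wr S_n)=\mathrm{Sym}^n A_{r-1}$, with $\mathbb{S}$ acting by scaling and $\mathbb{T}\simeq\mathbb{C}^*$ acting Hamiltonianly (weights $+1,-1$ on the two coordinates of each $\mathbb{C}^2$). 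The symplectic dual $\mathrm{Hilb}^n(\widetilde{A_{r-1}})\to \mathrm{Sym}^n A_{r-1}$ from Example (2) will serve as a bookkeeping device.

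\textbf{Left-hand side.} I would run the Bialynicki--Birula decomposition of $M_{r,n}$ for a generic one-parameter subgroup of $\mathbb{S}\times\mathbb{T}$. Since $M_{r,n}$ is a smooth Nakajima quiver variety whose torus-fixed points are isolated and indexed by $r$-tuples of partitions $(\lambda^{(1)},\dots,\lambda^{(r)})$ with $\sum_i|\lambda^{(i)}|=n$, the cohomology is pure and even, and $\dim H^{2d}(M_{r,n})$ equals the number of fixed points whose attracting cell has complex dimension $d$. Reading the cell dimensions off the Nakajima tangent-space character (arm/leg statistics between the $\lambda^{(i)}$) and summing over $n$ gives the Nakajima--Yoshioka product formula
\[\sum_{n\ge 0} P_q(M_{r,n})\,z^n=\prod_{k\ge 1}\prod_{i=0}^{r-1}\bigl(1-q^{2(k-1)+2i}z^{k}\bigr)^{-1},\]
whose total dimension is $[z^n]\prod_{m\ge1}(1-z^m)^{-r}$, the number of $r$-tuples of partitions of $n$.

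\textbf{Right-hand side (the crux).} I would present $\mathbb{C}[\mathrm{Sym}^n A_{r-1}]$ as the ring of $\mathbb{Z}/r\mathbb{Z}\wr S_n$-invariants generated by $u_i,v_i,w_i$ $(1\le i\le n)$ subject to $u_iv_i=w_i^{\,r}$, with $\mathbb{S}$-degrees $r,r,2$ and $w_i$ of $\mathbb{T}$-weight $0$. The fixed-point ring is $B_0/\sum_{w>0}B_wB_{-w}$, the weight-zero invariants modulo the weight-zero part of the ideal generated by all nonzero-weight invariants. To control this I would exploit the weight decomposition $\mathrm{Sym}^n\bigl(\bigoplus_k R_k\bigr)=\bigoplus_{\sum n_k=n}\bigotimes_k \mathrm{Sym}^{n_k}(R_k)$, where $R_k$ is the weight-$k$ piece of $\mathbb{C}[A_{r-1}]$ (so $R_0=\mathbb{C}[w]$ and the positive part is generated by $u$), together with the multiplication maps $R_w\otimes R_{-w}\to R_0$, whose image lies in $(w^{r})\subseteq\mathbb{C}[w]$. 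The essential difficulty is that this fixed-point scheme is highly non-reduced: products of invariants of opposite weight create hidden relations, so its length greatly exceeds the naive value $\dim \mathrm{Sym}^n(A_{r-1}^{\mathbb{T}})$ (already for $r=n=2$ the length is $5$, not $3$). I expect to resolve this by exhibiting an explicit monomial basis of $B_0/\sum_{w>0}B_wB_{-w}$ indexed by $r$-tuples of partitions of $n$, with each basis vector's $\mathbb{S}$-degree determined combinatorially; equivalently, one may try to identify the length of the fixed scheme with the number of $\mathbb{T}$-fixed points of $\mathrm{Hilb}^n(\widetilde{A_{r-1}})$ by $\mathbb{T}$-equivariant localization along the Hilbert--Chow morphism.

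\textbf{Matching and the main obstacle.} Granting the basis above, the generating function of right-hand graded dimensions has the same product shape as the Nakajima--Yoshioka formula, and the comparison of the two sides reduces to one combinatorial identity; in particular the total dimensions agree, both being $[z^n]\prod_{m\ge1}(1-z^m)^{-r}$. The delicate and decisive step is the third one: pinning down the non-reduced structure of $(\mathrm{Sym}^n A_{r-1})^{\mathbb{T}}$ precisely enough to extract its \emph{full graded} Hilbert series rather than merely its support or its reduced length, and to align the $\mathbb{S}$-grading on the coordinate ring with the cohomological grading on $H^*(M_{r,n})$. I expect the bulk of the work, and the only genuinely hard part, to be this determination of the fixed-point ideal and the resulting degree statistic.
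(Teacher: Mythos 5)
You have correctly identified the architecture of the argument --- reduce the theorem to a comparison of graded dimensions, handle $H^*(M_{r,n})$ via the Nakajima--Yoshioka formula \cite[Theorem 3.8]{nak}, and produce a basis of the non-reduced fixed-point ring $\mathbb{C}[Z_n^{\mathbb{T}}]$ --- and this is indeed the same architecture as the paper, whose proof of the theorem is exactly such a two-line comparison. But your proposal stops precisely where the proof has to begin: the step you yourself call ``the crux'' and ``the only genuinely hard part'' is left entirely as an expectation (``I expect to resolve this by exhibiting an explicit monomial basis\dots''). That step is not one ingredient among several; it is the whole mathematical content of the paper, which devotes all of Section 2 to it: the passage to the projective limit $S$ of the rings $R_n^{S_n}/I_n^{S_n}$, the multiplication rule for multisymmetric functions of tri-partitions (Lemma \ref{lem1}), the basis of $S/J$ (Lemma \ref{basis}), the inductive expansion formulas (Lemma \ref{p} through Lemma \ref{5} and Corollary \ref{cor}), and finally Theorem \ref{vec}. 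Note also that the answer is not literally what you predict: the basis of $\mathbb{C}[Z_n^{\mathbb{T}}]$ that comes out is $\{\bar m_{(\lambda_0,\cdots,\lambda_{r-1})(0,1,0)^{|\lambda|}}\}$ indexed by tuples with $|\lambda|+l(\lambda)\le n$, of degree $2\sum_i(r|\lambda_i|+i\,l(\lambda_i))$, not by tuples of partitions of $n$; matching this against the fixed-point count of $M_{r,n}$ requires a further bijection (add $1$ to every part and pad with parts of size $1$). Your proposed shortcut --- identifying the length of the fixed scheme with the number of $\mathbb{T}$-fixed points of $\mathrm{Hilb}^n(\widetilde{\mathbb{C}^2/(\mathbb{Z}/r\mathbb{Z})})$ by localization along the Hilbert--Chow morphism --- is not a method but a restatement of what must be proved: there is no general principle equating the length of the non-reduced $\mathbb{T}$-fixed scheme of the singular base with the number of fixed points of a resolution, and even if there were, it could only give the total dimension, whereas the theorem asserts an isomorphism of \emph{graded} vector spaces, with the grading coming from $\mathbb{S}$ rather than from the cohomological grading upstairs.

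A second, concrete error: your Nakajima--Yoshioka product formula is mis-stated. The correct exponents are $2(r(k-1)+i)$ for $0\le i\le r-1$ (equivalently $2(rk-i)$ for $1\le i\le r$), not $2(k-1)+2i$; your version is correct only for $r=1$. For instance, for $r=2$ the coefficient of $z^2$ in your product is $1+2q^2+2q^4$, whereas $P_q(M_{2,2})=1+q^2+2q^4+q^6$. The total dimensions still agree (both count $r$-tuples of partitions of $n$), but since your endgame is a graded comparison of generating functions, this error would break the argument even if the right-hand basis had been constructed.
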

\begin{rem}
If $r=1$, the framed moduli space $M_{1,n}$ is a Hilbert scheme $\mathrm{Hilb}^n(\mathbb{C}^2)$. Hence, in this case, The isomorphism is preserving the structure of algebra by \cite{hikita}. \\
If $n=1$, Theorem \ref{hatano} gives the opposite isomorphism of \cite{pavel} in the case of $A$-type we have the following isomorphism:   
\[\mathbb{C}[(\mathbb{C}^2/(\mathbb{Z}/r\mathbb{Z}))^{\mathbb{T}}]=\mathbb{C}[z]/z^r=H^*(M_{r,1}). \]
\end{rem}
The paper is organized as the following. 
First, we consider projective limits $S$ and $I$ of $\mathbb{C}[\mathbb{C}^{2n}/(\mathbb{Z}/r\mathbb{Z}\wr S_n)]$ and a defining ideal of $\mathbb{T}$-fixed points in $\mathbb{C}^{2n}/(\mathbb{Z}/r\mathbb{Z}\wr S_n)$ along $n$. By Lemma \ref{basis}, we can take basis of $S$ indexed by $r$-tuples partitions. 
In Lemma \ref{lem1} to Lemma \ref{4}, we consider the relation of  $S/I$. By the Theorem \ref{vec}, we can take basis of $\mathbb{C}[(\mathbb{C}^{2n}/(\mathbb{Z}/r\mathbb{Z}\wr S_n))^{\mathbb{T}}]$ indexed by $r$-tuples partition $(\lambda_0,\cdots,\lambda_{r-1})$ satisfying $\sum_i|\lambda_i|+l(\lambda_i)\leq n$. \\
\\
{\bf Acknowledgments.}
I would like to thank Daisuke Matsushita for helpful discussions on the manuscript of this paper. I am also grateful to Noriyuki Abe and Syu Kato for their useful comments and for teaching me about the Hikita conjecture.

\section{Coordinate ring of quotient singularities }
\subsection{Structure of projective limits}
We denote by $Z_n$ the quotient $\mathbb{C}^{2n}$ by the group $G_n$ where $G_n=(\mathbb{Z}/r\mathbb{Z})\wr S_n$. We define the $\mathbb{S}$-action on $\mathbb{C}^{2n}$ by $s((X_1,Y_1),\cdots,(X_n,Y_n))=((sX_1,sY_1),\cdots,(sX_n,sY_n))$ for any $s\in \mathbb{S}$ and $((X_1,Y_1),\cdots,(X_n,Y_n))\in \mathbb{C}^{2n}$. Since this action commute with the action of $G_n$ on $\mathbb{C}^{2n}$, we obtain the $\mathbb{S}$-action on $Z_n$. 
We consider that the coordinate ring $\mathbb{C}[Z_n]$. By the definition of $Z_n$,  we get
\begin{align}
\mathbb{C}[Z_n]&=\mathbb{C}[X_1,\cdots,X_n,Y_1,\cdots,Y_n]^{G_n}\notag \\
&\simeq (\mathbb{C}[x_1,\cdots,x_n,y_1,\cdots,y_n,\cdots,z_1,\cdots,z_n]/(x_1y_1-z_1^r,\cdots,x_ny_n-z_n ^r))^{S_n}.  \notag
\end{align}
Set $R_n=\mathbb{C}[x_1,\cdots,x_n,y_1,\cdots,y_n,\cdots,z_1,\cdots,z_n]$ and $I_n=(x_1y_1-z_1^r,\cdots,x_ny_n-z_n ^r)$. 
The $\mathbb{S}$-action on $Z_n$ induces the $\mathbb{S}$-action on $\mathbb{C}[x_i,y_i,z_i]/I_n$ given by $s\cdot x_i=s^rx_i$, $s\cdot y_i=s^ry_i$, $s\cdot z_i=s^2z_i$ for $s\in\mathbb{S}$. 
\begin{prop}\label{syou}
\[\mathbb{C}[Z_n]\simeq R_n^{S_n}/I_n ^{S_n}. \]
\end{prop}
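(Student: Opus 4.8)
The plan is to realize both sides as rings of invariants and then reduce the statement to the exactness of the $S_n$-invariants functor over $\mathbb{C}$. First I would record the wreath-product decomposition $G_n=(\mathbb{Z}/r\mathbb{Z})^n\rtimes S_n$ and note that the normal subgroup $(\mathbb{Z}/r\mathbb{Z})^n$ acts pairwise on the factors $(X_i,Y_i)$, so that taking invariants in each factor produces the type-$A$ relation $x_iy_i=z_i^r$ with $x_i=X_i^r$, $y_i=Y_i^r$, $z_i=X_iY_i$. Computing the invariants in stages gives $\mathbb{C}[Z_n]=(\mathbb{C}[X,Y]^{(\mathbb{Z}/r\mathbb{Z})^n})^{S_n}\simeq (R_n/I_n)^{S_n}$, where $S_n$ permutes the triples $(x_i,y_i,z_i)$ diagonally. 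This is exactly the isomorphism already displayed in the excerpt, so the real content of the proposition is the identification $(R_n/I_n)^{S_n}\simeq R_n^{S_n}/I_n^{S_n}$.

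Next I would observe that $I_n$ is an $S_n$-stable ideal: a permutation $\sigma$ sends the generator $x_iy_i-z_i^r$ to $x_{\sigma(i)}y_{\sigma(i)}-z_{\sigma(i)}^r$, again a generator. Hence $S_n$ acts on the short exact sequence of $S_n$-modules $0\to I_n\to R_n\to R_n/I_n\to 0$, and applying the left-exact invariants functor yields an exact sequence $0\to I_n^{S_n}\to R_n^{S_n}\to (R_n/I_n)^{S_n}$, with $I_n^{S_n}=I_n\cap R_n^{S_n}$. This already gives the injection $R_n^{S_n}/I_n^{S_n}\hookrightarrow (R_n/I_n)^{S_n}$ of graded $\mathbb{C}$-algebras, since the $\mathbb{S}$-weight grading is preserved by every map in sight. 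It remains to prove surjectivity.

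For surjectivity I would use the Reynolds operator. Since $\mathbb{C}$ has characteristic zero and $S_n$ is finite, averaging $\mathcal{R}(f)=\frac{1}{n!}\sum_{\sigma\in S_n}\sigma\cdot f$ defines an $R_n^{S_n}$-linear projection onto the $S_n$-invariants; because $I_n$ is $S_n$-stable, $\mathcal{R}$ descends to the corresponding operator on $R_n/I_n$ and commutes with the quotient map. Given a class $\bar f\in (R_n/I_n)^{S_n}$, I choose any lift $f\in R_n$; then $\mathcal{R}(f)\in R_n^{S_n}$ maps to $\mathcal{R}(\bar f)=\bar f$, so the map $R_n^{S_n}\to (R_n/I_n)^{S_n}$ is onto. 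Equivalently, exactness of invariants over a field whose characteristic does not divide $|S_n|=n!$ forces $H^1(S_n,I_n)=0$. Combining the injection and the surjection gives the asserted isomorphism.

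I expect the only genuinely delicate point to be this surjectivity, namely lifting an $S_n$-invariant element of $R_n/I_n$ to an invariant of $R_n$; once the characteristic-zero averaging argument is in place this is immediate, and the remaining verifications (that $I_n$ is $S_n$-stable, that all maps respect the grading, and that $I_n^{S_n}$ is exactly the kernel) are routine.
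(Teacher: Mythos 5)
Your proposal is correct and takes essentially the same route as the paper: the paper also reduces the statement to showing the natural map $\phi\colon R_n^{S_n}/I_n^{S_n}\to (R_n/I_n)^{S_n}$ is bijective, notes injectivity, and proves surjectivity by averaging a lift $f$ of an invariant class over $S_n$, using $f-\sigma f\in I_n$ --- exactly your Reynolds-operator argument written out explicitly. Your cohomological remark ($H^1(S_n,I_n)=0$ in characteristic zero) is an equivalent repackaging, not a genuinely different method.
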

\begin{proof}
We will show that the following morphism is an isomorphism:
\[\phi:  R_n^{S_n}/I_n^{S_n} \to (R_n/I_n)^{S_n} .\]
Clearly the morphism $\phi$ is an injection. Hence, we enough to show that $\phi$ is a surjection.
Fix an element $\bar{f}\in \mathbb{C}[Z_n]$, where we denote by $\bar{f}$ the equivalence class of the element $f\in R_n$ in $R_n/I_n$. For any $\sigma\in S_n$, there exists the element $\sum_i g_i^\sigma(x_iy_i-z_i^r)\in I_n $ such that
\[f-\sigma f=\sum_i g_i^\sigma(x_i y_i-z_i^r). \]
Hence we get
\[f=\frac{1}{n!}\sum_{\sigma\in S_n} (\sigma f+\sum_ig_i^\sigma(x_iy_i-z_i^r))\]
Clearly, $\sum_{\sigma\in S_n}\sigma f\in R_n^{S_n}$ and $\sum_{\sigma\in S_n}\sum_i g_i^\sigma(x_iy_i-z_i^r)\in I_n$. Hence, by the definition of $\phi$, we have $\phi(\frac{1}{n!}\sum_{\sigma\in S_n}\sigma f\mod I_n^{S_n}  )=\bar{f}$. 

\end{proof}
We consider the natural projection
\[\phi_n:R_{n+1}^{S_{n+1}}\to R_n^{S_n}, \]
which is induced by 
\[f(x_1,\cdots,x_n,x_{n+1},y_1,\cdots,y_n,y_{n+1},z_1,\cdots,z_n,z_{n+1})\to f(x_1,\cdots,x_n,0,y_1,\cdots,y_n,0,z_1,\cdots,z_n,0).\]
Set $S'=\projlim_n R_n^{S_n}$, $S=\projlim_n R_n^{S_n}/I_n^{S_n}=\projlim_n \mathbb{C}[Z_n]$, and $I=\projlim_n I_n$. Since the projections $I_{n+1}\to I_n$ are surjective, $S\simeq S'/I$. 
For any tripartition $\Lambda=(a_1,b_1,c_1)(a_2,b_2,c_2)\cdots(a_l,b_l,c_l)$, the symmetric function $m_\Lambda$ is the symmetrization of the monomial $x_1^{a_1}y_1^{b_1}z_1^{c_1}\cdots x_l^{a_l}y_l^{b_l}z_l^{c_l}$ and we set $l(\Lambda)=l$.
The set $\{m_{\Lambda}\mid \Lambda\}$ is a basis of $S'$ as a vector space. As an algebra, $S'$ is freely generated by $\{m_{(a,b,c)}\}$ by \cite{sym}. Hence $\{m_{\Lambda}\mid \Lambda=(a_1,b_1,c_1)(a_2,b_2,c_2)\cdots , 0\leq c_i \leq r-1 \}$ is a basis of $S$ and $S\simeq \mathbb{C}[m_{(a,b,c)}]_{0\leq c\leq r-1}$ 
\begin{lem}\label{lem1}
We have the following equations on $S$:
\begin{align}
m_{(a,b,c)}m_{\Lambda}=&(u_{(a,b,c)}+1)m_{(a,b,c)\Lambda}+\sum_{\substack{u_{(i,j,k)}>0\\ c+k\leq r-1}}(u_{(a+i,b+j,c+k)}+1)m_{(a+i,b+j,c+k)\Lambda-(i,j,k)} \notag \\
&+\sum_{\substack{u_{(i,j,k)}>0\\ c+k\geq r}}(u_{(a+i+1,b+j+1,c+k-r)}+1)m_{(a+i+1,b+j+1,c+k-r)\Lambda-(i,j,k)}, \notag
\end{align}
where the non-negative integer $u_{(i,j,k)}$ is the number of $(i,j,k)$ in $\Lambda$. 
\end{lem}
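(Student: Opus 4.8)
The plan is to compute the product first in the larger ring $S'$, where no relation is imposed, and then to push the result forward along the quotient map $S'\to S=S'/I$. Both steps are purely combinatorial statements about monomial symmetric functions in the triples $(x_p,y_p,z_p)$, and the only genuine work is tracking multiplicities.

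\textbf{Step 1 (multiplication in $S'$).} Writing $m_{(a,b,c)}=\sum_p x_p^a y_p^b z_p^c$, I expand $m_{(a,b,c)}m_\Lambda=\sum_p\sum_M x_p^a y_p^b z_p^c\,M$ over the distinct monomials $M$ of shape $\Lambda$ and collect by shape. The coefficient of a fixed monomial $N$ of a given shape equals the number of indices $p$ at which $N$ is divisible by $x_p^a y_p^b z_p^c$ with quotient of shape $\Lambda$. This produces exactly two families: when the chosen index carries a new triple $(a,b,c)$, the term $(u_{(a,b,c)}+1)\,m_{(a,b,c)\Lambda}$; and when it merges with an existing part $(i,j,k)$, a term $(u_{(a+i,b+j,c+k)}+1)\,m_{(a+i,b+j,c+k)\Lambda-(i,j,k)}$ for each distinct $(i,j,k)$ with $u_{(i,j,k)}>0$. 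Using $(a,b,c)\neq(0,0,0)$, a short check shows distinct parts $(i,j,k)$ give distinct target shapes, so no terms collapse.

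\textbf{Step 2 (reduction to $S$).} I then apply $S'\to S$. Since $(a,b,c)$ and every part of $\Lambda$ have $z$-exponent at most $r-1$, all target shapes above are already reduced except the merged ones with $c+k\geq r$. The key observation is that, because each part of $\Lambda$ has $z$-exponent $\le r-1<c+k$, one has $u_{(a+i,b+j,c+k)}=0$ in that range; hence the Step-1 coefficient of such a term is exactly $1$, and the shape $(a+i,b+j,c+k)\Lambda-(i,j,k)$ has a single part with $z$-exponent $\geq r$. As $c,k\le r-1$ force $c+k\le 2r-2$, a single application of $z_p^r=x_p y_p$ reduces it to $(a+i+1,b+j+1,c+k-r)$ with $z$-exponent in $[0,r-1]$.

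\textbf{Step 3 (reduction multiplicity) and the main obstacle.} It remains to compute the image of $m_{(a+i,b+j,c+k)\Lambda-(i,j,k)}$ in $S$. Reduction acts monomial by monomial, converting each monomial of this shape into one of shape $(a+i+1,b+j+1,c+k-r)\Lambda-(i,j,k)$ by un-merging the single high part. Counting fibres, a fixed target monomial has exactly $t:=u_{(a+i+1,b+j+1,c+k-r)}+1$ indices carrying $(a+i+1,b+j+1,c+k-r)$, and un-reducing any one of them recovers a distinct source monomial; so the reduction map is $t$-to-$1$ and $m_{(a+i,b+j,c+k)\Lambda-(i,j,k)}\mapsto t\,m_{(a+i+1,b+j+1,c+k-r)\Lambda-(i,j,k)}$. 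Multiplying the Step-1 coefficient $1$ by $t$ gives the coefficient $u_{(a+i+1,b+j+1,c+k-r)}+1$ of the third sum, and assembling the three families yields the identity. The delicate point is precisely this step: the quotient map does \emph{not} simply send $m_\mu$ to the symmetric function of the reduced shape, because passing $z_p^r=x_p y_p$ through a monomial symmetric function merges orbits and changes coefficients. The pleasant cancellation is that the $z$-exponent bound on $\Lambda$ forces the $S'$-coefficient of every term needing reduction to be $1$, so the entire third coefficient is produced by the fibre count of the reduction map rather than by the multiplication; getting this bookkeeping right is the only real content of the proof.
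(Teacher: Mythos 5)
Your proof is correct and takes essentially the same route as the paper's: the paper cites Hikita's Lemma 2.4 for the product in $S'$ (your Step 1) and then applies the relation $z_p^r=x_py_p$ to reduce the shapes with $c+k\geq r$ (your Steps 2--3). The only difference is one of detail: you make explicit the fibre-counting argument showing that reduction is $t$-to-$1$ with $t=u_{(a+i+1,b+j+1,c+k-r)}+1$, which is exactly the bookkeeping the paper's one-line conclusion leaves implicit, and which explains why the third sum's coefficient is $u_{(a+i+1,b+j+1,c+k-r)}+1$ rather than $u_{(a+i,b+j,c+k)}+1=1$.
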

\begin{proof}By \cite[Lemma2.4]{hikita}, we have
\[m_{(a,b,c)}m_{\Lambda}=(u_{(a,b,c)}+1)m_{(a,b,c)\Lambda}+\sum_{u_{(i,j,k)}>0}(u_{(a+i,b+j,c+k)}+1)m_{(a+i,b+j,c+k)\Lambda-(i,j,k)}.  \]
In $S$, we have the following relation
\[x_i^ay_i^bz_i^{c+k}=x_i^{a+1}y_i^{b+1}z_i^{c+k-r}\quad (c+k\geq r).\]
Hence, we have this formula. 
\end{proof}
We define the $\mathbb{T}$-action on $\mathbb{C}^{2n}$ by  $t((X_1,Y_1),\cdots,(X_n,Y_n))=((tX_1,t^{-1}Y_1),\cdots,(tX_n,t^{-1}Y_n))$ for any $t\in \mathbb{T}$ and $((X_1,Y_1),\cdots,(X_n,Y_n))\in\mathbb{C}^{2n}$. Since this action commute with the $G_n$-action on $\mathbb{C}^{2n}$, we obtain the $\mathbb{T}$-action on $Z_n$. Let $J_n$ be the defining ideal of $\mathbb{T}$-fixed points in $Z_n$ and $J=\projlim J_n$. By \cite{fixed}, $J$ is generated by $\{m_{(a,b,c)}\mid a\neq b\}$. Therefore, $S/J\simeq \mathbb{C}[\bar m_{(a,a,c)}]_{0\leq c\leq r-1}$ and we have the following proposition.  

\begin{prop}
\[S/(m_{\Lambda},m_{(a,b,c)}\mid l(\Lambda)>n, a\neq b)\simeq \mathbb{C}[Z_n^{\mathbb{T}}]. \]
\end{prop}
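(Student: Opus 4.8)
The plan is to factor the quotient through the finite-level ring $\mathbb{C}[Z_n]$: I will identify the two families of relations in the statement with, respectively, the kernel of the canonical projection from $S$ onto $\mathbb{C}[Z_n]$ and the defining ideal $J_n$ of the fixed locus. Write $\pi_n\colon S\to \mathbb{C}[Z_n]$ for the projection to the $n$-th factor of the projective limit.

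The first and main step is to show that $\ker\pi_n$ equals the ideal $K_n:=(m_\Lambda\mid l(\Lambda)>n)$. As a vector space this is immediate from the basis $\{m_\Lambda\mid 0\le c_i\le r-1\}$ of $S$ recalled above: $\pi_n$ sends $m_\Lambda$ to the monomial symmetric function in the first $n$ triples of variables, which is a basis vector of $\mathbb{C}[Z_n]$ when $l(\Lambda)\le n$ and vanishes exactly when $l(\Lambda)>n$, so $\ker\pi_n=\operatorname{span}\{m_\Lambda\mid l(\Lambda)>n\}$. The content is that this span is already an ideal, which I read off from the length bookkeeping in Lemma \ref{lem1}: if $l(\Lambda)>n$, then every summand of $m_{(a,b,c)}m_\Lambda$ again has length $>n$, since the leading term $m_{(a,b,c)\Lambda}$ has length $l(\Lambda)+1$ while each correction term is obtained by deleting a single part of $\Lambda$ and adjoining a single new part, hence has length exactly $l(\Lambda)$. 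As $S$ is generated as an algebra by the $m_{(a,b,c)}$, closure under multiplication by these generators shows the span is an ideal, so $\operatorname{span}\{m_\Lambda\mid l(\Lambda)>n\}=K_n$ and $\mathbb{C}[Z_n]\simeq S/K_n$.

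The second step is to identify $\pi_n(J)$ with $J_n$, where $J=(m_{(a,b,c)}\mid a\neq b)$ is the ideal recalled before the statement. The transition maps $J_{m+1}\to J_m$ are surjective, just as for the $I_m$, so the projection from the projective limit $J$ to each term is surjective; in particular $\pi_n$ restricts to a surjection $J\twoheadrightarrow J_n$ and $\pi_n(J)=J_n$, the defining ideal of $Z_n^{\mathbb{T}}$.

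Combining the two steps gives
\[\mathbb{C}[Z_n^{\mathbb{T}}]=\mathbb{C}[Z_n]/J_n\simeq (S/K_n)\big/\pi_n(J)\simeq S/(K_n+J),\]
and $K_n+J$ is exactly the ideal $(m_\Lambda,\,m_{(a,b,c)}\mid l(\Lambda)>n,\ a\neq b)$ appearing in the statement. I expect the only genuine obstacle to be the first step: one must make sure that reducing $S$ modulo the length-$>n$ relations reproduces the finite-variable ring and does not collapse anything further, which is exactly what the length count in Lemma \ref{lem1} guarantees by showing that these relations are closed under multiplication.
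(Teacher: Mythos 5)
Your proof is correct and is essentially the paper's (implicit) argument: the paper offers no separate proof at all, presenting the proposition as an immediate consequence of the identifications $S\simeq\varprojlim_n\mathbb{C}[Z_n]$ and $J=(m_{(a,b,c)}\mid a\neq b)=\varprojlim_n J_n$ established just before it, and your two steps simply combine exactly these facts. The one detail you make explicit that the paper leaves tacit --- that $\mathrm{span}\{m_\Lambda\mid l(\Lambda)>n\}$ is already an ideal, via the length bookkeeping of Lemma \ref{lem1}, so that killing it recovers $\mathbb{C}[Z_n]$ with no further collapse --- is correct and is the right point to check.
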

For any partitions $\lambda_0,\cdots,\lambda_{r-1}$ $(\lambda_i=(\lambda_i^1\leq \lambda_i^2 \leq\cdots)=(1^{\alpha_i^1},2^{\alpha_i^2},\cdots))$, we define the tri-partitions $\lambda=(\lambda_0,\cdots,\lambda_{r-1})$ by
\begin{align}
\lambda&=(\lambda_0,\cdots,\lambda_{r-1})=(\lambda_0^1,0,0)\cdots(\lambda_0^{l(\lambda_0)},0,0)\cdots(\lambda_{1}^1,0,1)\cdots(\lambda_i^j,0,i)\cdots(\lambda_{r-1}^{l(\lambda_{r-1})},0,r-1) \notag \\
&=\prod_{i=0}^{r-1}\prod_{j=1}^{l(\lambda_i)}(\lambda_i^j,0,i) \notag
\end{align}
where the symbol $l(\lambda_i)$ denotes $\sum_{j}\alpha_i^j$. We consider $S/J$ as the graded algebra whose grading is induced by $\mathbb{S}$-action on $S$. For instance, $\deg \bar m_{(a,a,c)}=2(ra+c)$. 
We set $l(\lambda)=\sum_il(\lambda_i)$ and $|\lambda|=\sum_i|\lambda_i|$. In the following, we just consider a tri-partition $\Lambda=(a_1,b_1,c_1)(a_2,b_2,c_2)\cdots$ with $0\leq c_i\leq r-1$ for any $i$. 
\begin{lem}\label{basis}
The set $\{\bar{m}_{(\lambda_{0},\cdots,\lambda_{r-1})(0,1,0)^{|\lambda|}}\}$ is a basis of $S/J$.
\end{lem}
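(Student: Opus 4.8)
The plan is to compare $\{\bar{m}_{(\lambda_0,\dots,\lambda_{r-1})(0,1,0)^{|\lambda|}}\}$ with the monomial basis of $S/J$ coming from its polynomial-ring structure, and to prove that the transition matrix between the two is triangular with nonzero diagonal entries. Since $S/J\simeq\mathbb{C}[\bar{m}_{(a,a,c)}]_{0\le c\le r-1}$ is a polynomial algebra, the monomials $\Phi(\lambda):=\prod_{i,j}\bar{m}_{(\lambda_i^j,\lambda_i^j,i)}$ form a vector-space basis as $\lambda=(\lambda_0,\dots,\lambda_{r-1})$ runs over the index set (a part of size $a$ in $\lambda_c$ contributing a factor $\bar{m}_{(a,a,c)}$; in particular empty parts in the components $c\ge1$ account for the generators $\bar{m}_{(0,0,c)}$). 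A direct weight count gives $\deg\Phi(\lambda)=\deg\bar{m}_{(\lambda_0,\dots)(0,1,0)^{|\lambda|}}$, so $\lambda\mapsto\Phi(\lambda)$ is a degree-preserving bijection onto this basis, and it suffices to establish $\bar{m}_{(\lambda_0,\dots)(0,1,0)^{|\lambda|}}=c_\lambda\Phi(\lambda)+(\text{lower terms})$ with $c_\lambda\ne0$.

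The statistic I would track is the number of parts $l(\Lambda)$. Two reductions drive the argument, both obtained from Lemma \ref{lem1} together with the vanishing of single-part classes $\bar{m}_{(a,b,c)}$ with $a\ne b$ in $S/J$. First, whenever a tripartition $\Lambda=p\,\Lambda'$ has a part $p=(a,b,c)$ with $a\ne b$, expanding $m_p m_{\Lambda'}$ and reducing modulo $J$ rewrites $\bar{m}_\Lambda$ as a combination of classes $\bar{m}_{\Lambda''}$ with strictly fewer parts, namely the terms of Lemma \ref{lem1} that merge $p$ into an existing part; iterating, every class is eventually expressed through classes all of whose parts satisfy $a=b$. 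Second, taking $(a,b,c)=(0,1,0)$, for which $\bar{m}_{(0,1,0)}\equiv0$, shows that adjoining a part $(0,1,0)$ equals, up to a nonzero scalar, a signed sum of classes in which the $b$-index of one existing part has been raised by one (here $c+k=k\le r-1$, so no wrap-around occurs). Iterating this relation $|\lambda|$ times expands $\bar{m}_{(\lambda_0,\dots)(0,1,0)^{|\lambda|}}$ as a signed sum over all ways of distributing the $|\lambda|$ raisings among the parts.

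I would then isolate the leading term by part count. Among the surviving tripartitions with all parts of the form $(a,a,c)$, the maximal number of parts is $l(\lambda)$, attained uniquely by raising each part $(\lambda_i^j,0,i)$ all the way up to $(\lambda_i^j,\lambda_i^j,i)$, that is by $\nu(\lambda):=\prod_{i,j}(\lambda_i^j,\lambda_i^j,i)$: any other distribution either leaves some part with $a\ne b$ (forcing a merge that lowers the count) or raises a $(0,1,0)$ into a $(0,k,0)$ with $k\ge2$ (which must then be merged away, again lowering it). Re-expanding $\bar{m}_{\nu(\lambda)}$ as $c_\lambda^{-1}\Phi(\lambda)$ plus monomials of smaller part count — the merging of two $(s,s,i)$ and $(t,t,i')$ again produces a part with equal first two entries, so this stays within the all-$a=b$ classes — yields $\bar{m}_{(\lambda_0,\dots)(0,1,0)^{|\lambda|}}=c_\lambda\Phi(\lambda)+\sum_{l(\mu)<l(\lambda)}d_{\lambda\mu}\Phi(\mu)$. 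Ordering the index set by $l$, the transition matrix is block upper-triangular with diagonal entries $c_\lambda$, hence invertible, which proves the claim.

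The main obstacle is the bookkeeping inside this third step. One must verify that $l(\lambda)$ really is the maximal attainable part count and is reached only through $\nu(\lambda)$, so that no basis monomial of the same $l$ intrudes on the diagonal, and that the contribution $c_\lambda$ does not vanish: the several branches of the iterated Lemma \ref{lem1} that reach $\nu(\lambda)$ all consist of exactly $|\lambda|$ raising steps and therefore carry the common sign $(-1)^{|\lambda|}$ with positive magnitudes, so they add rather than cancel. Controlling the combinatorics of this recursion tree — in particular the interaction of the $b$-raising operation with the wrap-around case $c+k\ge r$ that appears in the merge steps — is where the real work lies; the triangularity and the resulting invertibility are then formal.
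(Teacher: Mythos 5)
Your proposal is correct in outline, but it takes a genuinely different route from the paper. The paper splits the statement into spanning and independence: its Step~1 is your first reduction, its Step~2 is an induction on $\sum_i b_i$ showing that every class $\bar m_{(a_1,b_1,c_1)\cdots(a_l,b_l,c_l)(0,1,0)^k}$ with $a_i\geq b_i$ lies in the span of the claimed set, and then independence costs nothing: since $S/J\simeq\mathbb{C}[\bar m_{(a,a,c)}]$, the dimension of the degree-$2k$ component equals the number of tuples $\lambda$ with $\sum_i(r|\lambda_i|+i\,l(\lambda_i))=k$, which is exactly the number of claimed elements in that degree, and a spanning set whose cardinality equals the dimension is a basis; so the paper never controls signs or shows any particular coefficient is nonzero. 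You instead prove that the transition matrix to the monomial basis $\Phi(\mu)$ is triangular for the part-count filtration with nonzero diagonal, which gives spanning and independence simultaneously but obliges you to rule out cancellation --- precisely the work the paper's counting trick avoids. Your sketch of that bookkeeping is completable, and the two facts that finish it are: (i) every application of Lemma \ref{lem1} whose left-hand side vanishes in $S/J$ replaces a class by classes with exactly one part fewer, with positive coefficients, so every branch of the recursion ending in a class with $l(\lambda)$ parts takes exactly $|\lambda|$ steps and carries the common sign $(-1)^{|\lambda|}$ --- this is the precise justification of your no-cancellation claim; (ii) since $(\lambda_i^j,0,i)$ has $\mathbb{T}$-weight $\lambda_i^j>0$, each $(0,1,0)$ has weight $-1$, and weights add under merging (also in the wrap-around case), each part of a terminal all-equal class is a weight-zero group of initial parts, so terminal classes have at most $l(\lambda)$ parts, with equality forcing each group to consist of one part $(\lambda_i^j,0,i)$ and exactly $\lambda_i^j$ copies of $(0,1,0)$, i.e.\ forcing $\nu(\lambda)$. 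Point (ii) also corrects a small inaccuracy in your third paragraph: a distribution that first creates a $(0,k,0)$ with $k\geq2$ need not end strictly below $l(\lambda)$ --- it can land exactly on $\nu(\lambda)$, as in $\bar m_{(0,2,0)(2,0,0)}=-\bar m_{(2,2,0)}$ inside the expansion of $\bar m_{(2,0,0)(0,1,0)^2}$ --- but by (i) such branches reinforce rather than cancel the direct one, so your triangularity survives. What your route buys is the explicit triangular change of basis (which in particular subsumes the paper's Step~2); what the paper's route buys is brevity and freedom from sign analysis. Finally, you made explicit a convention the paper uses only implicitly: parts of size $0$ must be allowed in the components $\lambda_c$ with $c\geq1$, so that the generators $\bar m_{(0,0,c)}$ are indexed; without it neither your bijection $\lambda\mapsto\Phi(\lambda)$ nor the lemma itself is correct (for $r=2$ the class $\bar m_{(0,0,1)}\neq0$ in degree $2$ would be missed).
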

\begin{proof} We show this proposition in three steps.
\begin{enumerate}
\item [Step1)] We claim the set $\{\bar{m}_{(a_1,a_1,c_1)(a_2,a_2,c_2)\cdots}\}$ spans $S/J$. It is enough to show that any $\bar m_{\Lambda}$ is written by the linear combination of $\bar{m}_{(a_1,a_1,c_1,)(a_2,a_2,c_2)\cdots}$. 
Suppose there exists $(a,b,c)$ contained in $\Lambda$ with $a\neq b$. 
In the case of $l(\Lambda)=1$, $\bar m_{(a,b,c)}=0$. Suppose $l(\Lambda)>1$.
By Lemma \ref{lem1}, we have
\[0=\bar m_{(a,b,c)}\bar m_{\Lambda-(a,b,c)}=u\bar m_{\Lambda}+\sum_{l(\Gamma)=l(\Lambda)-1}u_{\Gamma}\bar m_{\Gamma}. \]
for some $u>0$, $u_{\Gamma}\in \mathbb{Z}_{\geq 0}$. By the induction of the length, the proof of the claim is completed. \\
\item [Step2)] We  show that the symmetric function $\bar{m}_{(a_1,a_1,c_1,)(a_2,a_2,c_2)\cdots(a_l,a_l,c_l)(0,1,0)^k}$ is written by the linear combination  of $\bar{m}_{(\lambda_{0},\cdots,\lambda_{r-1})(0,1,0)^{|\lambda|}}$. We are enough to show that the symmetric function $\bar m_{(a_1,b_1,c_1)\cdots(a_l,b_l,c_l)(0,1,0)^k}$ with $a_i\geq b_i$ for any $i$ is written by the linear combination  of $\bar{m}_{(\lambda_{0},\cdots,\lambda_{r-1})(0,1,0)^{|\lambda|}}$.
We prove the claim by the induction of $b=\sum_ib_i$. Suppose $b=0$, that is, $b_i=0$ for any $i$. 
If $\sum_ia_i\neq k$, we have
\[\bar m_{(a_1,0,c_1)\cdots(a_l,0,c_l)(0,1,0)^k}=0 \]
by the definition  $J$. 

If  $\sum_ia_i=k$, we set the tri-partition $\lambda=(\lambda_0,\cdots,\lambda_{r-1})$ defined by $\alpha_i^j=\sharp\{t\in\{1,2,\cdots l\}\mid a_t=j, c_t=i\}$. Then we have 
\[\bar m_{(a_1,0,c_1)\cdots(a_l,0,c_l)(0,1,0)^k}=\bar{m}_{(\lambda_{0},\cdots,\lambda_{r-1})(0,1,0)^{|\lambda|}}. \]
Suppose $b>0$.  We may assume that $b_1>0$. In the case of $l=1$, by Lemma \ref{lem1}, we have
\[0=\bar m_{(a_1,b_1-1, c_1)}\bar m_{(0,1,0)^{k+1}}=\bar m_{(a_1,b_1-1,c_1)(0,1,0)^{k+1}}+\bar m_{(a_1,b_1,c_1)(0,1,0)^k}\]
By the induction of $b$,  $\bar m_{(a_1,b_1,c_1)(0,1,0)^k}$ is written by the
 linear combination of $\bar{m}_{(\lambda_{0},\cdots,\lambda_{r-1})(0,1,0)^{|\lambda|}}$. \\
Set $l>1$. By Lemma \ref{lem1}, we have
\begin{align}
0=&\bar m_{(a_1,b_1-1,c_1)}\bar m_{(a_2,b_2,c_2)\cdots(a_l,b_l,c_l)(0,1,0)^{k+1}} \notag \\
=&u_0\bar m_{(a_1,b_1-1,c_1)(a_2,b_2,c_2)\cdots(a_l,b_l,c_l)(0,1,0)^{k+1}} +u_1 \bar m_{(a_1,b_1,c_1)(a_2,b_2,c_2)\cdots(a_l,b_l,c_l)(0,1,0)^{k}} \notag \\
&+\sum_{c_1+c_i<r}u_i\bar m_{(a_2,b_2,c_2)\cdots(a_1+a_i,b_1+b_i-1,c_1+c_i)\cdots(a_l,b_l,c_l)(0,1,0)^{k+1}} \notag \\
&+\sum_{c_1+c_i>r-1}u_i\bar m_{(a_2,b_2,c_2)\cdots(a_1+a_i+1,b_1+b_i,c_1+c_i-r)\cdots(a_l,b_l,c_l)(0,1,0)^{k+1}} \notag
\end{align}
for some $u_1>0$, $u_i\in \mathbb{Z}_{\geq 0}$. By induction of  $b$, the first term and the third term are written by the linear combination of $\bar{m}_{(\lambda_{0},\cdots,\lambda_{r-1})(0,1,0)^{|\lambda|}}$. The fourth term is also the same by the induction of $l$. Hence the proof of the claim is completed.  
\item [Step3)] Finally, we prove that the linearly independence of $\{\bar{m}_{(\lambda_{0},\cdots,\lambda_{r-1})(0,1,0)^{|\lambda|}}\}$. Since $S/J\simeq \mathbb{C}[\bar{m}_{(a,a,c)}]$ and $\mathrm{deg}\;\bar{m}_{(a,a,c)}=2(ra+c)$, the degree of the monomial function $\bar{m}_{(a_1,a_1,c_1)}\bar{m}_{(a_2,a_2,c_2)}\cdots\bar{m}_{(a_l,a_l.c_l)}$ is equal to  $2\sum_ir|\lambda_i|+il(\lambda_i)$, where we set the partition $\lambda_i=(1^{\alpha_i^1},2^{\alpha_i^2},\cdots)$ given by  $\alpha_i^j=\sharp\{t\in\{1,2,\cdots l\}\mid a_t=j, c_t=i\}$. Hence the dimension of $2k$-th component is $\sharp\{(\lambda_0,\cdots,\lambda_{r-1})\mid \sum_ir|\lambda_i|+il(\lambda_i)=k\}$. \\
On the other hand, $\mathrm{deg}\;\bar{m}_{(\lambda_{0},\cdots,\lambda_{r-1})(0,1,0)^{|\lambda|}}= 2(\sum_ir|\lambda_i|+il(\lambda_i))$. Hence the proof is completed. 
\end{enumerate}
\end{proof}
\subsection{Basis of coordinate ring of torus fixed points }

In this subsection, our purpose is to prove the following theorem. 
\begin{theorem}\label{vec}
\[\mathbb{C}[Z_n^{\mathbb{T}}]\simeq \langle \bar m_{(\lambda_0,\cdots,\lambda_{r-1})(0,1,0)^{|\lambda|}}\mid 
 |\lambda|+l(\lambda)\leq n\rangle\]
\end{theorem}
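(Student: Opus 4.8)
The plan is to combine the proposition above with Lemma \ref{basis} and reduce the whole statement to a single identity about the length filtration. By the previous proposition,
\[\mathbb{C}[Z_n^{\mathbb{T}}]\simeq S/(m_\Lambda, m_{(a,b,c)}\mid l(\Lambda)>n,\ a\neq b).\]
Since $J=(m_{(a,b,c)}\mid a\neq b)$ is contained in the ideal on the right, I would first rewrite the quotient as $(S/J)/\overline{K}_n$, where $\overline{K}_n$ is the ideal of $S/J$ generated by $\{\bar m_\Lambda\mid l(\Lambda)>n\}$. By Lemma \ref{basis} the classes $\bar m_{(\lambda_0,\cdots,\lambda_{r-1})(0,1,0)^{|\lambda|}}$ form a basis of $S/J$, and the tri-partition indexing such a class has length exactly $|\lambda|+l(\lambda)$. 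Hence Theorem \ref{vec} is equivalent to the identity
\[\overline{K}_n=\langle \bar m_{(\lambda_0,\cdots,\lambda_{r-1})(0,1,0)^{|\lambda|}}\mid |\lambda|+l(\lambda)>n\rangle,\]
after which the quotient automatically acquires the asserted basis indexed by $|\lambda|+l(\lambda)\leq n$.

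One inclusion is immediate: each basis element with $|\lambda|+l(\lambda)>n$ is itself of the form $\bar m_\Lambda$ with $l(\Lambda)=|\lambda|+l(\lambda)>n$, hence is a generator of $\overline{K}_n$. For the reverse inclusion I would set up the length filtration on $S$: the subspace $F^{\geq k}=\mathrm{span}\{m_\Lambda\mid l(\Lambda)\geq k\}$ is exactly the kernel of the truncation $S\to\mathbb{C}[Z_{k-1}]$ sending the last variables to zero, so it is an ideal, and $\overline{K}_n$ is precisely the image of $F^{\geq n+1}$ in $S/J$. The reverse inclusion then amounts to the combinatorial claim that, when $\bar m_\Lambda$ is expanded in the basis of Lemma \ref{basis}, only classes with $|\lambda|+l(\lambda)\geq l(\Lambda)$ occur; equivalently $\mathrm{ord}(\bar m_\Lambda)\geq l(\Lambda)$, where $\mathrm{ord}$ records the minimal length $|\lambda|+l(\lambda)$ appearing in the expansion.

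I would prove this claim by induction, running the two reductions that produce the basis in Lemma \ref{basis}: Step 1 uses $\bar m_{(a,b,c)}\bar m_{\Lambda-(a,b,c)}=0$ for $a\neq b$ to remove off-diagonal blocks, and Step 2 uses $0=\bar m_{(a_1,b_1-1,c_1)}\bar m_{\cdots(0,1,0)^{k+1}}$ to trade a unit of $y$-degree for an extra $(0,1,0)$. The main obstacle is that neither reduction preserves the length: in Step 1 the relation expresses $\bar m_\Lambda$ through the ``combine'' terms of Lemma \ref{lem1}, which have length $l(\Lambda)-1$, while Step 2 raises the length, so a naive induction on $l$ only yields $\mathrm{ord}\geq l(\Lambda)-1$ and loses control by one at each combine. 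The point to isolate is that a length-lowering combine always merges two blocks $(a,b,c)$ and $(i,j,k)$ into the strictly larger block $(a+i,b+j,\cdots)$, whose extra diagonal part is precisely what the subsequent $y$-degree reductions of Step 2 reconvert into additional $(0,1,0)$ blocks; tracking $\mathrm{ord}$ through both reductions simultaneously, the minimal-length surviving classes are those obtained by maximally combining blocks, and these keep $\mathrm{ord}\geq l(\Lambda)$. Making this bookkeeping uniform across all $\Lambda$ is the technical heart of the argument.

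Once the identity for $\overline{K}_n$ is established, the graded basis $\{\bar m_{(\lambda_0,\cdots,\lambda_{r-1})(0,1,0)^{|\lambda|}}\mid |\lambda|+l(\lambda)\leq n\}$ drops out and Theorem \ref{vec} follows. As an independent check of the combinatorics I would test the total dimension against the Remark: for $r=1$ it must give $\#\{\lambda\mid|\lambda|+l(\lambda)\leq n\}=p(n)=\dim H^*(\mathrm{Hilb}^n(\mathbb{C}^2))$, where $p(n)$ is the number of partitions of $n$, and for $n=1$ it must give $r=\dim\mathbb{C}[z]/z^r$. Should the inductive control of the combine terms prove too delicate, a fallback is to compute the graded dimension of $\mathbb{C}[Z_n^{\mathbb{T}}]=\mathbb{C}[Z_n]/J_n$ directly, realizing it as a finite quotient of the symmetric functions $\mathbb{C}[z_1,\dots,z_n]^{S_n}$ in the $z$-variables and matching its Hilbert series degree by degree with $\sum_{|\lambda|+l(\lambda)\leq n}q^{\,r|\lambda|+\sum_i i\,l(\lambda_i)}$; this supplies the reverse dimension inequality to the surjection coming from the easy inclusion, and hence the isomorphism.
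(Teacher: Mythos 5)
Your setup coincides with the paper's: both reduce Theorem \ref{vec} to the claim that, when $\bar m_\Lambda$ is expanded in the basis of Lemma \ref{basis}, only classes $\bar m_{(\lambda_0,\cdots,\lambda_{r-1})(0,1,0)^{|\lambda|}}$ with $|\lambda|+l(\lambda)\geq l(\Lambda)$ occur. The gap is that you do not prove this claim; you correctly observe that the naive induction through Steps 1 and 2 of Lemma \ref{basis} loses one unit of length at every combining term of Lemma \ref{lem1}, and then assert that a simultaneous ``bookkeeping'' of the two reductions recovers the bound --- but that assertion is exactly the technical heart, and it is left as a hope. The paper does not argue this way at all. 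Its mechanism is to prove explicit closed-form expansions (the proposition with coefficients $d(b,c,\lambda,\mu)$, then Lemma \ref{4} and Lemma \ref{5}), in which the combinatorial identities of Lemma \ref{2} and Lemma \ref{3} force massive cancellation: in Lemma \ref{5} only the terms with $P_c(\lambda,\mu)=0$ and $l(\lambda)-l(\mu)\leq a+1$ survive. This yields Corollary \ref{cor}, the statement that multiplication by a diagonal generator $\bar m_{(a,a,c)}$ sends $\bar m_{(\lambda_0,\cdots,\lambda_{r-1})(0,1,0)^{|\lambda|}}$ into the span of basis elements of order at least $|\lambda|+l(\lambda)$. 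None of this is visible from the shape of the reductions alone; it requires the exact factorial coefficients. With Corollary \ref{cor} in hand, the paper's proof of Theorem \ref{vec} is an induction not on length but on $d(\Lambda)-re(\Lambda)$ (degree minus $r$ times the number of $(0,1,0)$ blocks): it peels a block $(a,b,c)\neq(0,1,0)$ off $\Lambda$ via the product $\bar m_{(a,b-1,c)}\bar m_{(0,1,0)\Lambda'}$; when $a\neq b-1$ this product vanishes in $S/J$ and the induction closes, and when $a=b-1$ the product is controlled precisely by Corollary \ref{cor}. So what is missing from your proposal is not a finer version of your induction but a genuinely new ingredient, the order-preservation statement for diagonal multiplication, which is where all the work of the section is concentrated.

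Your fallback is also not a proof as it stands. The fixed-point scheme is non-reduced (already for $n=1$ it is $\mathrm{Spec}\:\mathbb{C}[z]/(z^r)$), and while $\mathbb{C}[Z_n^{\mathbb{T}}]$ is indeed a quotient of $\mathbb{C}[z_1,\cdots,z_n]^{S_n}$, identifying the kernel of that surjection --- equivalently its Hilbert series --- is essentially the theorem itself: the relations come from products such as $m_{(a,0,c)}m_{(0,b,c')}$ lying in $J_n$, and controlling their images is again the content of Lemmas \ref{4}, \ref{5} and Corollary \ref{cor}. So neither your main route nor your fallback, as written, closes the argument; your reduction and the easy inclusion are correct and agree with the paper, but the reverse inclusion is unproved.
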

To prove Theorem 3, we will show some propositions. 

For any $r$-tuples of partitions  $\lambda=(\lambda_0,\cdots,\lambda_{r-1})$ and $\mu=(\mu_0,\cdots,\mu_{r-1})$ ($(\lambda_i=(\lambda_i^1\leq \lambda_i^2 \leq\cdots)=(1^{\alpha_i^1},2^{\alpha_i^2},\cdots)$, $\mu_i=(\mu_i^1\leq \mu_i^2\leq\cdots)=(1^{\beta_i^1},2^{\beta_i^2},\cdots)$), we define $\mu\leq\lambda $ if and only if $\beta_i^j\leq \alpha_i^j $ for any $i,j$. Then the non-negative integers $P_c(\lambda,\mu)$ and $Q_c(\lambda,\mu)$ are given by
\[\sum_{i}i(l(\lambda_i)-l(\mu_i))+c=P_c(\lambda,\mu)r+Q_c(\lambda,\mu).\]
\begin{lem}\label{p}
For any tri-partitions $\lambda=(\lambda_0,\cdots,\lambda_{r-1})$, $\mu=(\mu_0,\cdots,\mu_{r-1})$ and any non negative integers $j, k$ with $0\leq k \leq r-1$, we set $\lambda'=(\lambda_0,\cdots,\lambda_{k-1},\lambda_{k}-j,\lambda_{k+1},\cdots,\lambda_{r-1})$ , where $\lambda_k-j=(1^{\alpha_k^1},2^{\alpha_k^2},\cdots,j^{\alpha_k^j-1},\cdots)$. Then we have 
\begin{equation}
\begin{cases}
P_c(\lambda,\mu)=P_{c+k}(\lambda',\mu),\quad Q_c(\lambda,\mu)=Q_{c+k}(\lambda',\mu)& c+k \leq r-1 \\
P_c(\lambda,\mu)=P_{c+k-r}(\lambda',\mu)+1,\quad Q_c(\lambda,\mu)=Q_{c+k-r}(\lambda',\mu)& c+k \geq r \notag. 
\end{cases}
\end{equation}
\end{lem}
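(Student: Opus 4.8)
The plan is to observe that $P_c(\lambda,\mu)$ and $Q_c(\lambda,\mu)$ are nothing more than the quotient and remainder of the integer $N+c$ upon division by $r$, where I abbreviate $N=\sum_i i(l(\lambda_i)-l(\mu_i))$, with the remainder $Q_c$ normalized to lie in $\{0,1,\dots,r-1\}$. Since the defining equation $N+c=P_c(\lambda,\mu)\,r+Q_c(\lambda,\mu)$ together with $0\le Q_c(\lambda,\mu)\le r-1$ determines $P_c(\lambda,\mu)$ and $Q_c(\lambda,\mu)$ uniquely, the whole lemma should reduce to comparing the number being divided on the two sides and invoking uniqueness of division.

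First I would compute the effect on the relevant sum of passing from $\lambda$ to $\lambda'$. Since $\lambda'$ is obtained from $\lambda$ by deleting a single part of size $j$ from the $k$-th component $\lambda_k$, we have $l(\lambda'_k)=l(\lambda_k)-1$ and $l(\lambda'_i)=l(\lambda_i)$ for $i\ne k$. Hence $\sum_i i\,l(\lambda'_i)=\sum_i i\,l(\lambda_i)-k$, so that writing $N'=\sum_i i(l(\lambda'_i)-l(\mu_i))$ we obtain $N'=N-k$. I would stress that the size $j$ of the removed part plays no role, since only the lengths enter $N$.

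Next I would treat the two cases by substituting $N'=N-k$ into the defining equation for $\lambda'$ at the appropriate shifted index. In the case $c+k\le r-1$, the index $c+k$ lies in $\{0,\dots,r-1\}$, so $N'+(c+k)=(N-k)+(c+k)=N+c=P_{c+k}(\lambda',\mu)\,r+Q_{c+k}(\lambda',\mu)$; comparing with $N+c=P_c(\lambda,\mu)\,r+Q_c(\lambda,\mu)$ and using uniqueness of quotient and remainder gives the first line. In the case $c+k\ge r$, the admissible index is $c+k-r$, and $N'+(c+k-r)=(N-k)+(c+k-r)=N+c-r$, whence $N+c=\bigl(P_{c+k-r}(\lambda',\mu)+1\bigr)r+Q_{c+k-r}(\lambda',\mu)$; uniqueness of division then yields the second line, with the predicted $+1$ on the $P$-term and equality of the $Q$-terms.

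Since there is no genuine analytic or algebraic difficulty here, I expect the only point requiring care to be the normalization that keeps every subscript inside $\{0,\dots,r-1\}$: this is exactly why the statement splits into two cases, because the shift $c\mapsto c+k$ can push the index out of range and must be folded back by subtracting $r$, which is precisely what produces the extra $+1$ in the quotient. Recording that $0\le c,k\le r-1$ forces $0\le c+k-r\le r-2$ in the second case will confirm that the shifted index is always admissible.
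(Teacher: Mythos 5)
Your proof is correct and follows essentially the same route as the paper: both compute that passing from $\lambda$ to $\lambda'$ decreases the sum $\sum_i i\,(l(\lambda_i)-l(\mu_i))$ by exactly $k$, then compare the defining equations of $(P,Q)$ at the shifted index and conclude by uniqueness of quotient and remainder. Your explicit remark that $Q_c$ must be normalized to lie in $\{0,\dots,r-1\}$ (and that $0\le c+k-r\le r-2$ in the second case) is a point the paper leaves implicit, but it is the same argument.
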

\begin{proof}
Suppose $c+k\leq r-1$. By the definition $P,Q$, we have
\[P_c(\lambda,\mu)r+Q_c(\lambda,\mu)=\sum_ii(l(\lambda_i)-l(\mu_i))+c=\sum_ii(l(\lambda_i')-l(\mu_i))+c+k=P_{c+k}(\lambda',\mu)r+Q_{c+k}(\lambda',\mu).\]
Hence, $P_c(\lambda,\mu)=P_{c+k}(\lambda')$, $Q_c(\lambda,\mu)=Q_{c+k}(\lambda',\mu)$. \\
If $c+k\geq r$, 
by the definition $P$ and  $Q$, we have the following two equations.  
\[\sum_i i(l(\lambda_i')-l(\mu_i))+c+k-r=P_{c+k-r}(\lambda',\mu)r+Q_{c+k-r}(\lambda',\mu),\]
\[\sum_i i(l(\lambda_i)-l(\mu_i))+c=P_c(\lambda,\mu)r+Q_c(\lambda,\mu). \]
Since $\sum i(l(\lambda_i)-l(\mu_i))=\sum i(l(\lambda'_i)-l(\mu_i))+k$, we get
\[r=(P_c(\lambda,\mu)-P_{c+k-r}(\lambda',\mu))r+(Q_c(\lambda,\mu)-Q_{c+k-r}(\lambda',\mu)). \]
Hence, we get 
\begin{equation}
P_{c+k-r}(\lambda',\mu)=P_c(\lambda,\mu)-1 \text{ and } Q_{c+k-r}(\lambda',\mu)=Q_c(\lambda,\mu). \notag
\end{equation}
\end{proof}
\begin{prop}
For  $a,b,c\in \mathbb{Z}_{\geq0}$ and a tri-partition $\lambda=(\lambda_0,\cdots.\lambda_{r-1})$, if $a\geq  b>0$ and  $0\leq c\leq r-1$, 
we have the following equation:
\[\bar m_{(a,b,c)(\lambda_0,\cdots.\lambda_{r-1})(0,1,0)^{|\lambda|+a-b}}=\sum_{\substack{\mu=(\mu_0\cdots,\mu_{r-1})\leq \lambda \\ \mu_i=(1^{\beta_i^1},2^{\beta_i^2},\cdots)}}d(b,c,\lambda,\mu)\bar m_{( |\lambda|-|\mu|+a+P_c(\lambda,\mu),0, Q_c(\lambda, \mu))(\mu_0,\cdots,\mu_{r-1})(0,1,0)^{|\lambda|+a+P_c(\lambda,\mu)}}\]
where $\mu$ is running over satisfying $l(\lambda)-l(\mu)\leq b+P_c(\lambda,\mu)$ and
 \[d(a,b,c,\lambda,\mu)=\frac{(-1)^{b+P_c(\lambda,\mu)}(b+P_c(\lambda,\mu))!(\beta_{Q_c(\lambda,\mu)} ^{|\lambda|-|\mu|+a+P_c(\lambda,\mu)}+1)}{(b+P_c(\lambda,\mu)-(l(\lambda)-l(\mu)))!\prod_{i,j}(\alpha^j_i-\beta^j_{i})!}.\] 

\end{prop}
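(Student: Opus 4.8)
The plan is to prove the identity by induction on the length $l(\lambda)=\sum_i l(\lambda_i)$, treating $a,b,c$ as parameters. For the base case $\lambda=\emptyset$ one computes directly: since $a\geq b>0$ forces $a\neq b-1$, the class $\bar m_{(a,b-1,c)}$ vanishes in $S/J$ (recall $J$ is generated by the $m_{(a,b,c)}$ with $a\neq b$), so applying Lemma \ref{lem1} to $0=\bar m_{(a,b-1,c)}\bar m_{(0,1,0)^{a-b+1}}$ yields the one-term recursion $\bar m_{(a,b,c)(0,1,0)^{a-b}}=-\bar m_{(a,b-1,c)(0,1,0)^{a-b+1}}$; iterating $b$ times gives $(-1)^b\bar m_{(a,0,c)(0,1,0)^{a}}$. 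This matches the right-hand side, where the only admissible $\mu$ is $\emptyset$, so that $P_c=0$, $Q_c=c$, and $d=(-1)^b$.

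For the inductive step I would expand the relation
\[0=\bar m_{(a,b-1,c)}\;\bar m_{(\lambda_0,\cdots,\lambda_{r-1})(0,1,0)^{|\lambda|+a-b+1}}\]
by Lemma \ref{lem1}. Because $a\geq b>0$ gives $(a,b-1,c)\neq(0,1,0)$, the term in which $(a,b-1,c)$ merges with a $(0,1,0)$ block reproduces, with coefficient $1$, exactly $\bar m_{(a,b,c)(\lambda)(0,1,0)^{|\lambda|+a-b}}$, the left-hand side of the claim, which I move across. Every remaining term merges $(a,b-1,c)$ with a part $(\lambda_s^t,0,s)$ of $\lambda$, producing a monomial over a partition $\lambda'$ with $l(\lambda')=l(\lambda)-1$. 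By the dichotomy in Lemma \ref{lem1} these split into a family with first block $(a+\lambda_s^t,\,b-1,\,c+s)$ when $c+s\leq r-1$, and a family with $(a+\lambda_s^t+1,\,b,\,c+s-r)$ when $c+s\geq r$. A check of the $(0,1,0)$-exponents shows each term has exponent $|\lambda|+a-b+1=|\lambda'|+a'-b'$, i.e. is again of the normalized shape $\bar m_{(a',b',c')(\lambda')(0,1,0)^{|\lambda'|+a'-b'}}$ with $a'\geq b'$ and $0\leq c'\leq r-1$, so the induction hypothesis applies (summing over $\mu\leq\lambda'$); the case $b'=0$ needs no further reduction, being already a reduced basis monomial of Lemma \ref{basis}.

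The final step is to collect the two resulting sums over subpartitions $\mu\leq\lambda'\leq\lambda$ into a single sum over $\mu\leq\lambda$. Here Lemma \ref{p} is the bookkeeping device: it identifies the indices $P_{c+s}(\lambda',\mu),Q_{c+s}(\lambda',\mu)$ produced by the hypothesis with the target $P_c(\lambda,\mu),Q_c(\lambda,\mu)$, the carry ($c+s\geq r$) accounting precisely for the $+1$ on $P_c$ and on the first two entries of the leading block. I then must verify that, after this identification, the various partial coefficients sum to the single closed form $d(b,c,\lambda,\mu)$, and this is where I expect the main obstacle. It reduces to a factorial identity in which the falling factorial $(b+P_c)!/(b+P_c-(l(\lambda)-l(\mu)))!$ records the ordered ways of absorbing $l(\lambda)-l(\mu)$ parts against the $b+P_c$ available reduction slots, the product $\prod_{i,j}(\alpha_i^j-\beta_i^j)!$ corrects for repeated parts, and the factor $\beta_{Q_c(\lambda,\mu)}^{\,|\lambda|-|\mu|+a+P_c(\lambda,\mu)}+1$ descends from the multiplicity coefficients $u_{(\cdots)}+1$ of Lemma \ref{lem1}. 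The support condition $l(\lambda)-l(\mu)\leq b+P_c(\lambda,\mu)$ is forced simultaneously, since any term demanding more absorptions than slots carries a vanishing negative-argument falling factorial and hence does not appear.
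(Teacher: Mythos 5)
Your base case is fine, but your inductive step fails at a concrete point: you have dropped a term from the expansion in Lemma \ref{lem1}. When you expand
\[0=\bar m_{(a,b-1,c)}\,\bar m_{(\lambda_0,\cdots,\lambda_{r-1})(0,1,0)^{|\lambda|+a-b+1}},\]
Lemma \ref{lem1} produces, besides the merging terms, the pure concatenation term
$(u_{(a,b-1,c)}+1)\,\bar m_{(a,b-1,c)(\lambda_0,\cdots,\lambda_{r-1})(0,1,0)^{|\lambda|+a-(b-1)}}$, in which $(a,b-1,c)$ is adjoined as a new part and merges with nothing. So your assertion that ``every remaining term merges $(a,b-1,c)$ with a part $(\lambda_s^t,0,s)$ of $\lambda$'' is false. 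That term sits over the \emph{same} partition $\lambda$, with only $b$ lowered to $b-1$, so an induction on $l(\lambda)$ alone, with $a,b,c$ treated as parameters, can never reach it. The omission is not cosmetic: every term you do retain is, by your hypothesis, supported on $\mu\leq\lambda'$ with $l(\lambda')<l(\lambda)$, hence $\mu\neq\lambda$; consequently your final sum could never produce the $\mu=\lambda$ term $d(b,c,\lambda,\lambda)\,\bar m_{(a,0,c)(\lambda_0,\cdots,\lambda_{r-1})(0,1,0)^{|\lambda|+a}}$ of the claimed formula, whose coefficient $(-1)^b(\alpha_c^a+1)$ is nonzero and whose monomial is one of the basis elements of Lemma \ref{basis}. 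It is exactly the expansion of the concatenation term (by the statement for $b-1$) that supplies this contribution. This is why the paper runs a double induction, outer on $b$ and inner on $l(\lambda)$: the concatenation term and the no-carry family $(a+\lambda_s^t,b-1,c+s)$ are absorbed by the hypothesis for $b-1$, while the carry family $(a+\lambda_s^t+1,b,c+s-r)$, which keeps $b$ but shortens $\lambda$, is absorbed by the hypothesis for smaller length. Your scheme is repairable — induct on $b+l(\lambda)$, or lexicographically on $(b,l(\lambda))$ — after which it becomes the paper's argument; note that your own base case already iterates in $b$, which is the induction you denied yourself in the general step.

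A second, lesser gap: you leave the recombination of coefficients into the closed form $d(b,c,\lambda,\mu)$ as an expected ``main obstacle'' rather than carrying it out, and this is the computational heart of the proposition. The paper does it by applying Lemma \ref{p} to align the indices $P,Q$ across the three families and then invoking the multinomial identity (3) with $n_{00}=b+P_c(\lambda,\mu)-(l(\lambda)-l(\mu))$ and $n_{ij}=\alpha_i^j-\beta_i^j$; the support condition $l(\lambda)-l(\mu)\leq b+P_c(\lambda,\mu)$ does come out automatically, as you predict. As written, however, this part of your proposal is a statement of intent, not a proof.
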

\begin{proof}
We show this proposition by the induction on the variable $b$.
Set $b=1$. In this case, we use the induction of the length $l=l(\lambda)$. 
Suppose $ l=1$. Hence $\lambda=(j,0,k)$. 
By Lemma \ref{lem1}, we have
\begin{equation}
\begin{split}
\bar m_{(a,0,c)}\bar m_{(j,0,k)(0,1,0)^{j+a}} =&(\alpha^a_c+1) \bar m_{(a,0,c)(j,0,k)(0,1,0)^{j+a}}+\bar m_{(a,1,c)(j,0,k)(0,1,0)^{j+a-1}} \\
                                               &+\begin{cases}
\bar m_{(a+j,0,c+k)(0,1,0)^{j+a} }& c+k \leq r-1 \\
\bar m_{(a+j+1,1,c+k-r)(0,1,0)^{j+a}} & c+k \geq r. \notag
\end{cases}
\end{split}
\end{equation}
In $S/J$,  $\bar m_{(a,0,c)}=0$. Hence, we have
\begin{equation}
\begin{split}
\bar m_{(a,1,c)(j,0,k)(0,1,0)^{j+a-1}}=&-(\alpha^a_c+1) \bar m_{(a,0,c)(j,0,k)(0,1,0)^{j+a}} \\
                                           &-\begin{cases}
\bar m_{(a+j,0,c+k)(0,1,0)^{j+a} }& c+k \leq r-1 \\
\bar m_{(a+j+1,1,c+k-r)(0,1,0)^{j+a}} & c+k \geq r \notag. 
\end{cases}
\end{split}
\end{equation}
Clearly, $d(1,c,\lambda,\lambda)=-(\alpha^a_c+1)$. 
 Let us consider the case that $c+k \leq r-1 $. Since $d(1,c,\lambda,\varnothing)=-1$, we are done.  
Suppose $c+k \geq r$. By Lemma \ref{lem1}, we have
\begin{equation}
\bar m_{(a+j+1,0,c+k-r)}\bar m_{(0,1,0)^{j+a+1}}=\bar m_{(a+j+1,0,c+k-r)(0,1,0)^{j+a+1}}+\bar m_{(a+j+1,1,c+k-r)(0,1,0)^{j+a}}. \notag
\end{equation}
In $S/J$, $\bar m_{(a+j+1,0,c+k-r)}=0$. We have
\[\bar m_{(a+j+1,1,c+k-r)(0,1,0)^{j+a}}=-\bar m_{(a+j+1,0,c+k-r)(0,1,0)^{j+a+1}}. \]
Hence, the proof is completed in the case $l=1$. \\
Suppose $ l\geq 2$. By Lemma \ref{lem1} , we have
\begin{align}
&\bar m_{(a,0,c)}\bar m_{(\lambda_0,\cdots,\lambda_{r-1})(0,1,0)^{|\lambda|+a}} \notag \\
=&(\alpha_c^{a}+1)\bar m_{(a,0,c)(\lambda_0,\cdots,\lambda_{r-1})(0,1,0)^{ |\lambda|+a}} \notag\\
&+\bar m_{(a,1,c)(\lambda_0,\cdots,\lambda_{r-1})(0,1,0)^{|\lambda|+a-1}} \notag\\
&+\sum_{\alpha^j_k>0, c+k\leq r-1}(\alpha_{c+k}^{a+j}+1)\bar m_{(a+j,0,c+k)(\lambda_0,\cdots,\lambda_k-j,\cdots,\lambda_{r-1})(0,1,0)^{|\lambda|+a}}\\
&+\sum_{\alpha^j_k>0, c+k\geq r-1}\bar m_{(a+j+1,1,c+k-r)(\lambda_0,\cdots,\lambda_k-j,\cdots,\lambda_{r-1})(0,1,0)^{|\lambda|+a}}
\end{align}
In the part of (1), we set $\mu_0=\lambda_0,\cdots,\mu_{k}=\lambda_k-j,\cdots,\mu_{r-1}=\lambda_{r-1}$. Then we have
$\mu\leq \lambda$, $P_c(\lambda,\mu)=0$ and $Q_c(\lambda,\mu)=c+k$. Therefore, $d(1,c,\lambda,\mu)=-(\alpha_{c+k}^{a+j}+1)$. 
In the part of (2), set  $\lambda_0'=\lambda_0,\cdots,\lambda_k'=\lambda_k-j,\cdots,\lambda_{r-1} '=\lambda_{r-1}$.
By the hypothesis of induction and $l(\lambda')=l(\lambda)-1$, we have
\begin{equation}
\begin{split}
&\bar m_{(a+j+1,1,c+k-r)(\lambda_0',\cdots,\lambda_{r-1}')(0,1,0)^{|\lambda|+a-1}} \\
&=\sum_{\mu=(\mu_o,\cdots,\mu_{r-1})}d(1,c+k-r,\lambda',\mu)\bar m_{(|\lambda|-|\mu|+a+1+P_{c+k-r}(\lambda',\mu)),0,Q_{c+k-r}(\lambda',\mu))(\mu_0.,\cdots,\mu_{r-1})(0,1,0)^{|\lambda|+P_{c+k-r}(\lambda',\mu)+a+1}}\notag
\end{split}
\end{equation}
where $\mu$ satisfies that $\mu \leq \lambda'$ and $ l(\lambda')-l(\mu)\leq 1+P_{c+k-r}(\lambda',\mu)$. 
By Lemma, we have $d(1,c,\lambda,\mu)=-d(1,c+k-r,\lambda',\mu)$. 
 By Lemma \ref{lem1}, we have 
\begin{equation}
\begin{split}
&\bar m_{(a,1,c)(\lambda_0,\cdots,\lambda_{r-1})(0,1,0)^{|\lambda|+a-1}} \\
=&-(\alpha_c^{a}+1)\bar m_{(a,0,c)(\lambda_0,\cdots,\lambda_{r-1})(0,1,0)^{ |\lambda|+a}} \\
&-\sum_{\substack{\alpha^j_k>0 \\ c+k\leq r-1}}(\alpha_{c+k}^{a+j}+1)\bar m_{(a+j,0,c+k)(\lambda_0,\cdots,\lambda_k-j,\cdots,\lambda_{r-1})(0,1,0)^{|\lambda|+a}}\\
&-\sum_{\substack{\alpha^j_k>0 \\ c+k\geq r-1}}\bar m_{(a+j+1,1,c+k-r)(\lambda_0,\cdots,\lambda_k-j,\cdots,\lambda_{r-1})(0,1,0)^{|\lambda|+a}}. \notag
\end{split}
\end{equation}
Applying Lemma \ref{p} to the right hand side, we have 
 \begin{equation}
\begin{split}
&\bar m_{(a,1,c)(\lambda_0,\cdots,\lambda_{r-1})(0,1,0)^{|\lambda|+a-1}} \\
=&d(1,c,\lambda,\lambda)\bar m_{(a,0,c)(\lambda_0,\cdots,\lambda_{r-1})(0,1,0)^{ |\lambda|+a}} \\
&+\sum_{\substack{\alpha^j_k>0 \\ c+k\leq r-1}}d(1,c,\lambda,\mu)\bar m_{(a+j,0,c+k)(\lambda_0,\cdots,\lambda_k-l,\cdots,\lambda_{r-1})(0,1,0)^{|\lambda|+a}}\\
&+\sum_{\substack{\alpha^j_k>0 \\ c+k\geq r-1}}\sum_{\mu}d(1,c+k-r,\lambda',\mu)\bar m_{(|\lambda|-|\mu|+a+1+P_{c+j-r}(\lambda',\mu)),0,Q_{c+k-r}(\lambda',\mu)(\mu_0.,\cdots,\mu_{r-1})(0,1,0)^{ |\lambda|+P_{c+k-r}(\lambda',\mu)+a+1}} \\
=&\sum_{\mu=(\mu_0\cdots,\mu_{r-1})}d(1,c,\lambda,\mu)\bar m_{( |\lambda|-|\mu|+a+P_c(\lambda,\mu),0, Q_c(\lambda, \mu))(\mu_0,\cdots,\mu_{r-1})(0,1,0)^{|\lambda|+a+P_c(\lambda,\mu)}}.  \notag
\end{split}
\end{equation}
Suppose $b\geq 2$. We also use the induction of the length $l=l(\lambda)$. 
We consider that $(\lambda_0,\cdots,\lambda_{r-1})=(j,0,k)$. 
Since $a\geq b$, $\bar m_{(a,b-1,c)}=0$. Therefore, we have the following equation by Lemma \ref{lem1}
\begin{equation}
\begin{split}
\bar m_{(a,b,c)(j,0,k)(0,1,0)^{j+a-b}}=&-\bar m_{(a,b-1,c)(j,0,k)(0,1,0)^{j+a-(b-1)}} \\
&- \begin{cases}
\bar m_{(a+j,b-1,c+k)(0,1,0)^{j+a-(b-1)}} & c+k\leq r-1\\
\bar m_{(a+j+1,b,c+k-r)(0,1,0)^{j+a-(b-1)}} & c+k \geq r. \notag
\end{cases}
\end{split}
\end{equation}
Since $\bar m_{(a,b,c)(0,1,0)^{a-b}}=-\bar m_{(a,b-1,c)(0,1,0)^{1+a-b}}$, $d(b,c,\lambda,\lambda)=(-1)^{b}$ and $d(b,c,\lambda,\varnothing)=(-1)^{b+P_c(\lambda,\varnothing)}$, we have
\begin{equation}
\begin{split}
\bar m_{(a,b,c)(j,0,k)(0,1,0)^{j+a-b}}
=&-(-1)^{b-1}\bar m_{(a,0,c)(j,0,k)(0,1,0)^{j+a-(b-1)+(b-1)}} \\
&- \begin{cases}
(-1)^{b-1}\bar m_{(a+j,0,c+k)(0,1,0)^{j+a-(b-1)+(b-1)}} & c+k\leq r-1\\
(-1)^{b}\bar m_{(a+j+1,0,c+k-r)(0,1,0)^{j+a-(b-1)+(b-1)}} & c+k \geq r \notag
\end{cases}\\
=&-(-1)^{b-1}\bar m_{(a,0,c)(j,0,k)(0,1,0)^{j+a-(b-1)+b}} \\
&- \begin{cases}
(-1)^{b-1}\bar m_{(a+j,0,c+k)(0,1,0)^{j+a-(b-1)+(b-1)}} & c+k\leq r-1\\
(-1)^{b}\bar m_{(a+j+1,0,c+k-r)(0,1,0)^{j+a-(b-1)+b}} & c+k \geq r \notag
\end{cases}\\
=&d(b,c,\lambda,\lambda)\bar m_{(a,0,c)(j,0,k)(0,1,0)^{j+a}} +d(b,c,\lambda,\varnothing)\bar m_{(a+j+P_c(\lambda,\varnothing),0,Q_c(\lambda,\varnothing))(0,1,0)^{j+a+P_c(\lambda,\varnothing)}}. 
\end{split}
\end{equation}
Hence, we prove the case of $l=1$. \\ 
Suppose $l\geq 2$. 
By Lemma \ref{lem1} and $\bar m_{(a,b-1,c)}=0$, we have 
\begin{equation}
\begin{split}
&\bar m_{(a,b,c)(\lambda_0\cdots,\lambda_{r-1})(0,1,0)^{|\lambda|+a-b}}\notag \\
=&-\bar m_{(a,b-1,c)(\lambda_0\cdots,\lambda_{r-1})(0,1,0)^{|\lambda|+a-(b-1)}} \notag \\
&-\sum_{\substack{\alpha^k_j>0 \\ k+c\leq r-1}}\bar m_{(a+j,b-1,c+k)(\lambda_0\cdots,\lambda_k-j,\cdots,\lambda_{r-1})(0,1,0)^{ |\lambda|+a-(b-1)}}\notag\\
&-\sum_{\substack{\alpha^k_j>0 \\ k+c\geq r}}\bar m_{(a+j+1,b,c+k-r)(\lambda_0\cdots,\lambda_k-j,\cdots,\lambda_{r-1})(0,1,0)^{|\lambda|+a-(b-1)}}. \notag\\
\end{split}
\end{equation}
We set   $\lambda_0'=\lambda_0,\cdots,\lambda_k'=\lambda_k-j,\cdots,\lambda_{r-1} '=\lambda_{r-1}$. 
By the induction hypothesis of $b$ and Lemma \ref{p}, we have 
\begin{equation}
\begin{split}
&\sum_{\substack{\alpha^k_j>0 \\ k+c\leq r-1}}\bar m_{(a+j,b-1,c+k)(\lambda_0\cdots,\lambda_k-j,\cdots,\lambda_{r-1})(0,1,0)^{ |\lambda|+a-(b-1)}}\notag \\
&=\sum_{\substack{\alpha^k_j>0 \\ k+c\leq r-1}}\sum_{\mu}d(a+j,b-1,c+k,\lambda',\mu)\bar m_{(|\lambda'|-|\mu|+a+j+P_{c+k}(\lambda',\mu)),0,Q_{c+k}(\lambda',\mu)(\mu_0.,\cdots,\mu_{r-1})(0,1,0)^{ |\lambda'|+P_{c+k}(\lambda',\mu)+a+j}}  \notag\\
&=\sum_{\substack{\alpha^k_j>0 \\ k+c\leq r-1}}\sum_{\mu}d(a,b-1,c,\lambda',\mu)\bar m_{(|\lambda|-|\mu|+a+P_{c}(\lambda,\mu)),0,Q_{c}(\lambda,\mu)(\mu_0,\cdots,\mu_{r-1})(0,1,0)^{ |\lambda|+P_{c}(\lambda,\mu)+a}}.  \notag
\end{split}
\end{equation}
By the induction hypothesis of $l$ and Lemma \ref{p}, we have 
\begin{equation}
\begin{split}
&\sum_{\substack{\alpha^k_j>0 \\ k+c\geq r}}\bar m_{(a+j+1,b,c+k-r)(\lambda_0\cdots,\lambda_k-j,\cdots,\lambda_{r-1})(0,1,0)^{|\lambda|+a-(b-1)}}. \notag\\
=&\sum_{\substack{\alpha^k_j>0 \\ k+c\geq r}}\sum_{\mu}d(a+j+1,b,c+k-r,\lambda',\mu)\bar m_{(|\lambda|-|\mu|+a+1+P_{c+k-r}(\lambda',\mu)),0,Q_{c+k-r}(\lambda',\mu)(\mu_0.,\cdots,\mu_{r-1})(0,1,0)^{ |\lambda|+P_{c+k-r}(\lambda',\mu)+a+1}} \notag\\
=&\sum_{\substack{\alpha^k_j>0 \\ k+c\leq r-1}}\sum_{\mu}d(a,b-1,c,\lambda',\mu)\bar m_{(|\lambda|-|\mu|+a+P_{c}(\lambda,\mu)),0,Q_{c}(\lambda,\mu)(\mu_0,\cdots,\mu_{r-1})(0,1,0)^{ |\lambda|+P_{c}(\lambda,\mu)+a}}.  \notag
\end{split}
\end{equation}
Hence, we have
\begin{align}
&\bar m_{(a,b,c)(\lambda_0\cdots,\lambda_{r-1})(0,1,0)^{|\lambda|+a-b}}\notag \\
=&-\bar m_{(a,b-1,c)(\lambda_0\cdots,\lambda_{r-1})(0,1,0)^{|\lambda|+a-(b-1)}} \notag \\
&-\sum_{\substack{\alpha^k_j>0 \\ k+c\leq r-1}}\bar m_{(a+j,b-1,c+k)(\lambda_0\cdots,\lambda_k-j,\cdots,\lambda_{r-1})(0,1,0)^{ |\lambda|+a-(b-1)}} \notag\\
&-\sum_{\substack{\alpha^k_j>0 \\ k+c\geq r}}\bar m_{(a+j+1,b,c+k-r)(\lambda_0\cdots,\lambda_k-j,\cdots,\lambda_{r-1})(0,1,0)^{|\lambda|+a-(b-1)}} \notag\\
=&-\bar m_{(a,b-1,c)(\lambda_0\cdots,\lambda_{r-1})(0,1,0)^{|\lambda|+a-(b-1)}} \notag \\
&\sum_{\substack{\alpha^k_j>0 \\ k+c\leq r-1}}\sum_{\mu}d(a,b-1,c,\lambda,\mu)\bar m_{(|\lambda|-|\mu|+a+P_{c}(\lambda,\mu)),0,Q_{c}(\lambda,\mu)(\mu_0,\cdots,\mu_{r-1})(0,1,0)^{ |\lambda|+P_{c}(\lambda,\mu)+a}}.  \notag\\
&\sum_{\substack{\alpha^k_j>0 \\ k+c\leq r-1}}\sum_{\mu}d(a,b-1,c,\lambda,\mu)\bar m_{(|\lambda|-|\mu|+a+P_{c}(\lambda,\mu)),0,Q_{c}(\lambda,\mu)(\mu_0,\cdots,\mu_{r-1})(0,1,0)^{ |\lambda|+P_{c}(\lambda,\mu)+a}}.  \notag\\
=&\sum_{\mu=(\mu_0,\cdots,\mu_{r-1})}(-1)^{b+P_c(\lambda,\mu)}\bar m_{( |\lambda|-|\mu|+a+P_c(\lambda,\mu),0, Q_c(\lambda, \mu))(\mu_0,\cdots,\mu_{r-1})(0,1,0)^{|\lambda|+a+P_c(\lambda,\mu)}}  \notag \\
&\times \{ \frac{(b-1+P_c(\lambda,\mu))!}{(b-1+P_c(\lambda,\mu)-
(l(\lambda)-l(\mu)))!\prod_{(i,j)}(\alpha^{i}_{j}-\beta^{i}_{j})!} \notag \\
&+\sum_{i,j} \frac{(b-1+P_c(\lambda,\mu))!}{(b+P_c(\lambda,\mu)-
(l(\lambda)-l(\mu)))!(\alpha_i^j-\beta^j_i-1)!\prod_{(i,j)\neq(i',j')}(\alpha^{i'}_{j'}-\beta^{i'}_{j'})!} \}\notag\\
=&\sum_{\mu}d(b,c,\lambda,\mu)\bar m_{( |\lambda|-|\mu|+a+P_c(\lambda,\mu),0, Q_c(\lambda, \mu))(\mu_0,\cdots,\mu_{r-1})(0,1,0)^{|\lambda|+a+P_c(\lambda,\mu)}} \notag 
\end{align}
where the last equation is using the following formula: 
\begin{equation}
\sum_{i,j}\frac{(\sum_{i,j}n_{ij}-1)!}{(n_{ij}-1)!\prod_{(i,j)\neq(i',j')}n_{i'j'}!}=\frac{(\sum_{i,j}n_{ij})}{\prod_{i,j}n_{ij}!}
\end{equation}
where $n_{00}=b+P_c(\lambda,\mu)-(l(\lambda)-l(\mu))$, $n_{ij}=(\alpha_i^j-\beta_i^j)$. \\
Hence we prove this proposition. 
\end{proof}
Set
\[f_{\nu}^{\mu}(x)=\frac{(x-|\nu|)!(x-|\mu|+1)}{(x-|\nu|-l(\nu)+l(\mu))+1)!\prod_{i,j}(\gamma^j_i-\beta^j_i)!}. \]
where $\mu=(\mu_0\cdots,\mu_{r-1}), \nu=(\nu_0,\cdots,\nu_{r-1})$, $\mu_j=(1^{\beta_j^1},2^{\beta_j^2},\cdots)$, $\nu_j=(1^{\gamma_j^1},2^{\gamma_j^2}\cdots,)$. 
In the same way as \cite[Lemma 2.7, Lemma 2.8]{hikita}, we have two following lemmata. 
\begin{lem}\label{2}
For $\mu=(\mu_0\cdots,\mu_{r-1}), \nu=(\nu_0,\cdots,\nu_{r-1})$ $(\mu_j=(1^{\beta_j^1},2^{\beta_j^2},\cdots)$, $\nu_j=(1^{\gamma_j^1},2^{\gamma_j^2}\cdots,))$, we have 

\[f^{\mu}_{\nu}(x+1)-f^{\mu}_{\nu}(x)=\sum_{\substack{0\leq i\leq r-1\\ j\in \mathbb{Z}_{>0}}}f^{(\mu_0,\cdots,\mu_i\cup j,\cdots,\mu_{r-1})}_{\nu}(x).\]
where $\mu_i\cup j$ denotes the partition $(1^{\beta_i^1},\cdots,(j-1)^{\beta_i^{j-1},}j^{\beta_i^j+1},(j+1)^{\beta_i^{j+1}},\cdots)$
\end{lem}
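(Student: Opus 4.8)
The plan is to treat both sides as rational functions in $x$ assembled from the four factors of $f^{\mu}_{\nu}$, and to reduce the claimed finite-difference identity to two trivial combinatorial sums over the multiplicities $\beta^j_i,\gamma^j_i$. First I would introduce shorthand for the ingredients of $f^{\mu}_{\nu}(x)$, writing $A=x-|\nu|$, $B=x-|\mu|+1$, $C=x-|\nu|-l(\nu)+l(\mu)+1$ and $D=\prod_{i,j}(\gamma^j_i-\beta^j_i)!$, so that $f^{\mu}_{\nu}(x)=A!\,B/(C!\,D)$. The value of this notation is that the two operations appearing in the statement act very simply: replacing $x$ by $x+1$ fixes $\nu$, $\mu$ and $D$ and increments $A,B,C$ each by one, whereas replacing $\mu$ by $(\mu_0,\dots,\mu_i\cup j,\dots,\mu_{r-1})$ fixes $x$ and $A$ and changes only the three quantities that depend on $\mu$.

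For the right-hand side I would record the elementary transformation rules for adding a box of size $j$ to the $i$-th component: this increases $|\mu|$ by $j$ and $l(\mu)$ by $1$, hence sends $B\mapsto B-j$ and $C\mapsto C+1$, and it replaces the single factor $(\gamma^j_i-\beta^j_i)!$ in $D$ by $(\gamma^j_i-\beta^j_i-1)!$, which multiplies $1/D$ by $(\gamma^j_i-\beta^j_i)$. Thus each summand becomes $A!\,(B-j)(\gamma^j_i-\beta^j_i)/\bigl((C+1)!\,D\bigr)$; in particular the summands with $\gamma^j_i=\beta^j_i$ vanish automatically, so the sum is finite and no boundary case has to be treated separately. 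Factoring out $A!/\bigl((C+1)!\,D\bigr)$ reduces the right-hand side to $\frac{A!}{(C+1)!\,D}\sum_{i,j}(B-j)(\gamma^j_i-\beta^j_i)$, and using $\sum_{i,j}(\gamma^j_i-\beta^j_i)=l(\nu)-l(\mu)$ together with $\sum_{i,j}j(\gamma^j_i-\beta^j_i)=|\nu|-|\mu|$ this equals $\frac{A!}{(C+1)!\,D}\bigl(B(l(\nu)-l(\mu))+|\mu|-|\nu|\bigr)$.

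For the left-hand side I would place $f^{\mu}_{\nu}(x+1)-f^{\mu}_{\nu}(x)$ over the common denominator $(C+1)!\,D$, which yields $\frac{A!}{(C+1)!\,D}\bigl((A+1)(B+1)-B(C+1)\bigr)$. Expanding the bracket gives $B(A-C)+A+1$, and substituting the two identities $A-C=l(\nu)-l(\mu)-1$ and $A+1-B=|\mu|-|\nu|$ turns it into $B(l(\nu)-l(\mu))+|\mu|-|\nu|$, which is exactly the bracket obtained for the right-hand side. Since the two sides now share both the prefactor and the bracket, they coincide, completing the verification.

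The argument is essentially mechanical once the notation is fixed, and the factorial ratios should be read as falling factorials, so that the whole equality is an identity of polynomials (equivalently of rational functions) in $x$ and the formal manipulations with common denominators are legitimate. I expect the only genuine, though minor, obstacle to be the bookkeeping of how $|\mu|$, $l(\mu)$ and the single factorial in $D$ respond to adding one box: one must check that the shift $C\mapsto C+1$ induced by $l(\mu)\mapsto l(\mu)+1$ is precisely the shift induced by $x\mapsto x+1$, since it is this coincidence that lets both sides be written over the same denominator $(C+1)!\,D$.
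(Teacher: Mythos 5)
Your proof is correct: the bookkeeping all checks out --- the shifts $A\mapsto A+1$, $B\mapsto B+1$, $C\mapsto C+1$ under $x\mapsto x+1$, the factor $(\gamma^j_i-\beta^j_i)$ produced in the numerator when one box is added to $\mu_i$ (which also kills the $\gamma^j_i=\beta^j_i$ terms, so the infinite sum is legitimately finite), and the reduction of both sides to the common bracket $B(l(\nu)-l(\mu))+|\mu|-|\nu|$ over the denominator $(C+1)!\,D$, valid as an identity of rational functions in $x$. The paper itself gives no argument for this lemma (it only asserts it holds ``in the same way as'' Lemmas 2.7--2.8 of Hikita's paper), and your direct computation is exactly the verification that citation stands for, written out for $r$-tuples of partitions.
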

\begin{lem}\label{3}
For any $ \lambda=(\lambda_0,\cdots,\lambda_{r-1}), \mu=(\mu_0\cdots,\mu_{r-1})$ $(\lambda_j=(1^{\alpha_j^1},2^{\alpha_j^2}\cdots)$, $\mu_j=(1^{\beta_j^1},2^{\beta_j^2},\cdots))$, we have
\[\sum_{\substack{\mu\leq \nu\leq \lambda \\ \nu_j=(1^{\gamma_j^1},2^{\gamma_j^2},\cdots)}}(-1)^{l(\nu)+l(\lambda)}f^{\mu}_\nu(k+ |\lambda|)\frac{((|\lambda|+l(\lambda))-(|\nu|+l(\nu)))!} {(|\lambda|-|\nu|)!\prod_{i,j}(\alpha^j_i-\gamma^j_i)!}=\frac{k!}{(k-(l(\lambda)-l(\mu)))!\prod_{i,j}(\alpha^j_i-\beta^j_i)!}.\]
\end{lem}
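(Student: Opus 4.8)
The plan is to regard both sides as polynomials in $k$ (with $\lambda,\mu$ fixed) and show they satisfy a common forward‑difference recursion, running an induction on $d=l(\lambda)-l(\mu)$. Write $S(k)$ for the left‑hand side, $R(k)$ for the right‑hand side, and set $P=\prod_{i,j}(\alpha^j_i-\beta^j_i)!$. The crucial structural remark is that in $S(k)$ the coefficient $C(\lambda,\nu)=\frac{((|\lambda|+l(\lambda))-(|\nu|+l(\nu)))!}{(|\lambda|-|\nu|)!\prod_{i,j}(\alpha^j_i-\gamma^j_i)!}$ is independent of $k$, so all $k$‑dependence sits in the factor $f^\mu_\nu(k+|\lambda|)$.

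First I would establish, using Lemma \ref{2}, that $S$ satisfies
\[ S(k+1)-S(k)=\sum_{\substack{0\le i\le r-1\\ j>0}} S_{\mu\cup(i,j)}(k), \]
where $S_{\mu'}$ is the same sum with $\mu$ replaced by $\mu'=(\mu_0,\dots,\mu_i\cup j,\dots,\mu_{r-1})$. This is immediate: apply $f^\mu_\nu(k+1+|\lambda|)-f^\mu_\nu(k+|\lambda|)=\sum_{i,j}f^{\mu\cup(i,j)}_\nu(k+|\lambda|)$ termwise and exchange the two sums, using that $C(\lambda,\nu)$ does not depend on $k$. The one bookkeeping point is the range of $\nu$: after the substitution the natural range is $\mu\cup(i,j)\le\nu\le\lambda$, but the extra terms with $\mu\le\nu$ and $\mu\cup(i,j)\not\le\nu$ contribute nothing because $f^{\mu\cup(i,j)}_\nu$ vanishes there (a negative factorial appears in its denominator), so the ranges agree.

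Next I would check that $R$ obeys the \emph{same} recursion. Writing $R(k)=k^{\underline d}/P$ with $k^{\underline d}=k(k-1)\cdots(k-d+1)$, one computes $R(k+1)-R(k)=\frac{d}{P}\,k^{\underline{d-1}}$. On the other hand, adjoining a box $(i,j)$ to $\mu$ lowers $d$ by $1$ and replaces $P$ by $P/(\alpha^j_i-\beta^j_i)$, so $R_{\mu\cup(i,j)}(k)=(\alpha^j_i-\beta^j_i)\,k^{\underline{d-1}}/P$; summing over all boxes and using $\sum_{i,j}(\alpha^j_i-\beta^j_i)=l(\lambda)-l(\mu)=d$ reproduces exactly $\frac{d}{P}\,k^{\underline{d-1}}$ (terms with $\alpha^j_i=\beta^j_i$ drop out, consistently with $\mu\cup(i,j)\not\le\lambda$). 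With both recursions in hand the induction on $d$ runs smoothly: the base $d=0$ forces $\mu=\lambda$ and a direct substitution gives $S(k)=R(k)=1$; for $d\ge1$, the two recursions together with the inductive hypothesis applied to each $\mu\cup(i,j)$ (which has smaller $d$) show that $S(k)-R(k)$ has vanishing first difference, hence is constant in $k$.

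The remaining and most delicate step is to pin this constant to $0$, i.e. to evaluate $S-R$ at a single value of $k$. Since $R(k)=k^{\underline d}/P$ vanishes at $k=0$ for $d\ge1$, it suffices to show $S(0)=0$; after cancelling the $(|\lambda|-|\nu|)!$ factors this becomes an alternating binomial‑type identity $\sum_{\mu\le\nu\le\lambda}(-1)^{l(\nu)+l(\lambda)}(\cdots)=0$ in the $r$‑tuple variables, which I would settle exactly as in \cite[Lemma 2.8]{hikita}, the finite‑difference/inclusion–exclusion computation being formally the single‑partition one carried out coordinatewise in the $r$ colours. This base evaluation is where the genuine combinatorial content lies; the recursion machinery above is precisely what reduces the full statement to it, so I expect it to be the main obstacle.
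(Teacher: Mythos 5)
You cannot really be compared against the paper's own argument here, because the paper does not prove Lemma \ref{3} at all: it only asserts that it follows ``in the same way as'' \cite[Lemma 2.7, Lemma 2.8]{hikita}. Your finite-difference skeleton is almost certainly the intended argument, and most of it checks out: the recursion $S(k+1)-S(k)=\sum_{i,j}S_{\mu\cup(i,j)}(k)$ (including the bookkeeping that terms with $\mu\le\nu$ but $\mu\cup(i,j)\not\le\nu$ die because a negative factorial appears in the denominator of $f^{\mu\cup(i,j)}_\nu$), the identical recursion for $R(k)=k^{\underline{d}}/P$ with $d=l(\lambda)-l(\mu)$ and $P=\prod_{i,j}(\alpha^j_i-\beta^j_i)!$, the base case $d=0$, and the conclusion that $S-R$ is a constant polynomial are all sound.

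The genuine gap is the step you defer. Pinning the constant at $k=0$ requires $S(0)=0$ for $d\ge 1$, which is exactly the lemma at $k=0$ and carries all of its combinatorial content; and it does \emph{not} follow ``coordinatewise in the $r$ colours'' from Hikita's single-partition identity. The colours are coupled: the sign $(-1)^{l(\nu)}$ and the factorials $(x-|\nu|)!$, $(x-|\nu|-l(\nu)+l(\mu)+1)!$, $((|\lambda|+l(\lambda))-(|\nu|+l(\nu)))!$ all involve $|\nu|=\sum_i|\nu_i|$ and $l(\nu)=\sum_i l(\nu_i)$ jointly, so the sum over $\nu$ is not a product of $r$ single-colour sums. (A correct reduction to the uncoloured case does exist --- group the $\nu$ with fixed totals $g^j=\sum_i\gamma^j_i$ and collapse each group by Vandermonde, $\sum_{\sum_i c_i=C}\prod_i\binom{n_i}{c_i}=\binom{\sum_i n_i}{C}$ --- but that is a different argument from the one you sketch, and it still has to be written down.) The cheaper fix stays entirely inside your framework: since $S-R$ is a constant polynomial you may evaluate at any integer, and $k_0=|\mu|-|\lambda|-1$ is the right choice, not $k=0$. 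At $x=k_0+|\lambda|=|\mu|-1$ every term with $\nu\ne\mu$ vanishes, because for $\nu\ne\mu$ one has $f^\mu_\nu(x)=(x-|\mu|+1)\,(x-|\nu|)^{\underline{\,l(\nu)-l(\mu)-1\,}}/\prod_{i,j}(\gamma^j_i-\beta^j_i)!$, which contains the factor $(x-|\mu|+1)$; only $\nu=\mu$ survives (with $f^\mu_\mu\equiv 1$), giving, with $m=|\lambda|-|\mu|$, $S(k_0)=(-1)^{d}(m+d)!/(m!\,P)$. On the other hand $R(k_0)=k_0^{\underline{d}}/P=(-(m+1))(-(m+2))\cdots(-(m+d))/P=(-1)^{d}(m+d)!/(m!\,P)$. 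Hence the constant is zero and the lemma follows with no external identity needed; as written, however, your proposal leaves its acknowledged crux unproved and justifies it with a reduction that is not valid.
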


\begin{lem}\label{4}
For non-negative integers $a$, $b$ and a tri-partition $\lambda=(\lambda_0,\cdots,\lambda_{r-1})$ with $|\lambda|+a-b\geq0$, we have 
\begin{align}
&\bar m_{(a,b,c)(\lambda_0,\cdots,\lambda_{r-1})(0,1,0)^{|\lambda|+a-b}} \notag \\
=&\sum_{\substack{\mu\leq \lambda \\ \mu_j=(1^{\beta_j^1},2^{\beta_j^2},\cdots)}}(\beta^{|\lambda|-|\mu|+a+P_c(\lambda,\mu)}_{Q_c(\lambda,\mu)}+1)\bar m_{(|\lambda|-|\mu|+a+P_c(\lambda,\mu),0,Q_c(\lambda,\mu))(\mu_0,\cdots,\mu_{r-1})(0,1,0)^{|\lambda|+a-b+P_c(\lambda,\mu)}} \notag \\
&\times \{\sum_{\substack{\mu\leq \nu \leq \lambda \\|\nu|\leq |\lambda|+a-b+P_c(\lambda,\mu)}} (-1)^{l(\nu)+l(\lambda)+b+P_c(\lambda,\mu)}f^{\mu}_{\nu}(a+P_c(\lambda,\mu)+|\lambda|)\frac{(l(\lambda)+|\lambda|-l(\nu)-|\nu|+a-b+P_c(\lambda,\mu))!}{(|\lambda|-|\nu|+a-b+P_c(\lambda,\mu))!\prod_{i,j}(\alpha^j_i-\gamma^j_i)!}\}. \notag
\end{align}
\end{lem}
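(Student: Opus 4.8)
The plan is to obtain this expansion directly from the Proposition proved just above, trading its closed factorial coefficient $d(b,c,\lambda,\mu)$ for the alternating sum over intermediate tri-partitions $\nu$ supplied by Lemma \ref{3}; Lemma \ref{2}, being the finite-difference recursion from which Lemma \ref{3} is deduced, does the real combinatorial work behind the scenes. In other words, Lemma \ref{4} should be read as the preceding Proposition with its coefficient rewritten, and the content of the proof is the verification that the rewriting is exactly an instance of Lemma \ref{3}, now valid on the full range $|\lambda|+a-b\ge 0$ rather than only on $a\ge b>0$.

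First I would restrict to the range $a\ge b>0$ covered by the Proposition and expand $\bar m_{(a,b,c)(\lambda_0,\dots,\lambda_{r-1})(0,1,0)^{|\lambda|+a-b}}$ as $\sum_{\mu\le\lambda} d(b,c,\lambda,\mu)\,\bar m_{(|\lambda|-|\mu|+a+P_c(\lambda,\mu),0,Q_c(\lambda,\mu))(\mu_0,\dots,\mu_{r-1})(0,1,0)^{\cdots}}$. I would then split off the multiplicity factor $\beta^{|\lambda|-|\mu|+a+P_c(\lambda,\mu)}_{Q_c(\lambda,\mu)}+1$ from $d(b,c,\lambda,\mu)$, leaving the bare factorial ratio. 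This ratio is precisely the right-hand side of Lemma \ref{3} once its free parameter is set to $b$ and its size parameter to $|\lambda|+a-b+P_c(\lambda,\mu)$; under this identification the evaluation point of $f^\mu_\nu$ becomes $a+P_c(\lambda,\mu)+|\lambda|$ and the two factorials acquire the shift $a-b+P_c(\lambda,\mu)$, matching the statement. Substituting the left-hand side of Lemma \ref{3} for this ratio produces exactly the bracketed $\nu$-sum, and the constraint $|\nu|\le|\lambda|+a-b+P_c(\lambda,\mu)$ is forced automatically: for larger $\nu$ the factorial arguments turn negative and the reciprocal factorials annihilate those terms, so they may be dropped from the range of summation.

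The two delicate points are the bookkeeping of $P_c$ and $Q_c$ and the extension beyond $a\ge b$. For the former I would invoke Lemma \ref{p}, which records how $P_c$ and $Q_c$ behave under removing a box and under the dichotomy $c+k\le r-1$ versus $c+k\ge r$; this is what lets the parameter identification with Lemma \ref{3} be carried out uniformly in $c$, so that the single $\nu$-sum absorbs both branches. The main obstacle is the complementary range $0\le a<b$ together with the case $b=0$, where the Proposition does not apply: here I would either rerun its induction on $b$ via Lemma \ref{lem1}, now without the simplification $\bar m_{(a,b-1,c)}=0$ that had relied on $a\ge b$, or argue that the rewritten coefficient is a polynomial identity in $a$ which, holding on the infinite range $a\ge b$, must persist throughout. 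Checking that the $\nu$-sum continues to truncate correctly in this regime, and that no spurious multiplicity survives in the boundary terms, is the step I expect to demand the most care.
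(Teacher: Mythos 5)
Your reduction of Lemma \ref{4} to ``the Proposition with its coefficient rewritten by Lemma \ref{3}'' breaks at its central step. Since the Proposition and Lemma \ref{4} expand the same element over the same collection of monomial functions, proving Lemma \ref{4} on the range $a\ge b>0$ amounts to the coefficient identity (writing $P_c$ for $P_c(\lambda,\mu)$) equating $(-1)^{b+P_c}(b+P_c)!\big/\bigl[(b+P_c-(l(\lambda)-l(\mu)))!\prod_{i,j}(\alpha^j_i-\beta^j_i)!\bigr]$ with the bracketed $\nu$-sum. But Lemma \ref{3} is a \emph{one}-parameter identity: the same $k$ governs the evaluation point $f^\mu_\nu(k+|\lambda|)$ and the right-hand side $k!\big/\bigl[(k-(l(\lambda)-l(\mu)))!\prod_{i,j}(\alpha^j_i-\beta^j_i)!\bigr]$, and the factorial ratio inside its sum carries no shift. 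The bracket of Lemma \ref{4} is a \emph{two}-parameter object: $f^\mu_\nu$ is evaluated at $a+P_c+|\lambda|$, the factorial ratio is shifted by $a-b+P_c$, and the target depends only on $b+P_c$. There is no ``size parameter'' in Lemma \ref{3} that you can set to $|\lambda|+a-b+P_c$: the quantities $|\lambda|$, $l(\lambda)$ and the index set $\{\nu\mid\mu\le\nu\le\lambda\}$ are bound to the actual tri-partition $\lambda$ and cannot be dialed independently of the evaluation point. Already for $r=1$, $\lambda=(1)$, $\mu=\varnothing$, Lemma \ref{3} is the identity $-2+(k+2)=k$, whereas Lemma \ref{4} requires $-(2+a-b)+(a+2)=b$; no specialization of $k$ turns the former into the latter, and they coincide term by term only at $a=b$. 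The genuine content of Lemma \ref{4} for $a\ne b$ is precisely that the $a$-dependence of the individual terms cancels, and a single substitution into Lemma \ref{3} cannot deliver that cancellation.

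The paper supplies the missing cancellation by a mechanism that your proposal relegates to a fallback and then misdescribes. It inducts on $|\lambda|+l(\lambda)+a-b$; the Proposition-plus-Lemma-\ref{3} conversion is used only at the anchor $a=b$, where the shift degenerates and Lemma \ref{3} applies directly; and for every $a\ne b$ --- including $a<b$ and $b=0$ --- it uses the vanishing $\bar m_{(a,b,c)}=0$ in $S/J$ together with the product rule of Lemma \ref{lem1} to rewrite the target as a combination of terms with strictly smaller $|\lambda|+l(\lambda)+a-b$, then recombines the inductive expansions via Lemma \ref{p} and the multinomial identity (3). Two corrections to your plan follow. First, your worry that for $a<b$ one loses the vanishing trick is backwards: $\bar m_{(a,b,c)}=0$ holds exactly when $a\ne b$, so it is available throughout the complementary range; it is the diagonal $a=b$ that forces recourse to the Proposition. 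Second, polynomial continuation in $a$ does not transport the identity from $a\ge b$ to $a<b$: the truncation $|\nu|\le|\lambda|+a-b+P_c$ agrees with the vanishing of the polynomial $(s+1)\cdots(s+l(\lambda)-l(\nu))$, where $s=|\lambda|-|\nu|+a-b+P_c$, only while $s\ge -(l(\lambda)-l(\nu))$; once $a-b+P_c<0$ there are excluded terms (for instance $\nu=\lambda$) whose polynomial extension is nonzero, so the truncated sum is no longer the polynomial you continued, and the constrained identity claimed in Lemma \ref{4} does not follow by polynomiality alone.
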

\begin{proof}
If $ |\lambda|+l(\lambda)+a-b=0$, that is, $\lambda=\varnothing$, $a=b$, we have 
\[\bar m_{(a,a,c)}=(-1)^a\bar m_{(a,0,c)(0,1,0)^a}. \]
Hence, it is clear.\\
Suppose $ |\lambda|+l(\lambda)+a-b>0$. 
If $a=b$, by Lemma \ref{lem1} and Lemma \ref{3}, we get
\begin{align}
&\bar m_{(a,a,c)}m_{(\lambda_0,\cdots,\lambda_{r-1})(0,1,0)^{|\lambda|}} \notag \\
=&\sum_{\mu}d(a,c,\lambda,\mu)m_{( |\lambda|-|\mu|+a+P_c(\lambda,\mu),0, Q_c(\lambda, \mu))(\mu_0,\cdots,\mu_{r-1})(0,1,0)^{|\lambda|+a+P_c(\lambda,\mu)}} \notag \\
=&\sum_{\mu\leq \lambda}(\beta^{|\lambda|-|\mu|+a+P_c(\lambda,\mu)}_{Q_c(\lambda,\mu)}+1)\bar m_{(|\lambda|-|\mu|+k+P_c(\lambda,\mu),0,Q_c(\lambda,\mu))(\mu_0,\cdots,\mu_{r-1})(0,1,0)^{|\lambda|+P_c(\lambda,\mu)}} \notag \\
&\times \{\sum_{\substack{\mu\leq \nu \leq \lambda \\|\nu|\leq |\lambda|+P_c(\lambda,\mu)}} (-1)^{l(\nu)+l(\lambda)+a+P_c(\lambda,\mu)}f^{\mu}_{\nu}(a+P_c(\lambda,\mu)+|\lambda|)\frac{(l(\lambda)+|\lambda|-l(\nu)-|\nu|+P_c(\lambda,\mu))!}{(|\lambda|-|\nu|+P_c(\lambda,\mu))!\prod_{i,j}(\alpha^j_i-\gamma^j_i)!}\}. \notag
\end{align}
If $a\neq b$. Using Lemma \ref{lem1} and $\bar{m}_{(a,b,c)}=0$ in $S/J$, we have
\begin{align}
&\bar m_{(a,b,c)(\lambda_0,\cdots,\lambda_{r-1})(0,1,0)^{|\lambda|+a-b}} \notag\\
=&-\bar m_{(a,b+1,c)(\lambda_0\cdots,\lambda_{r-1})(0,1,0)^{ |\lambda|+a-(b+1)}} \notag \\
&-\sum_{\substack{\alpha^k_j>0 \\ k+c\leq r-1}}\bar m_{(a+j,b,c+k)(\lambda_0\cdots,\lambda_k-j,\cdots,\lambda_{r-1})(0,1,0)^{|\lambda|+a-b}} \notag \\
&-\sum_{\substack{\alpha^k_j>0 \\ k+c\geq r}}\bar m_{(a+j+1,b+1,c+k-r)(\lambda_0\cdots,\lambda_k-j,\cdots,\lambda_{r-1})(0,1,0)^{|\lambda|+a-(b+1)}}. \notag 
\end{align}
We set $\lambda_0'=\lambda_0,\cdots,\lambda_k'=\lambda_k-j,\cdots,\lambda_{r-1} '=\lambda_{r-1}$. By the induction hypothesis of  $ |\lambda|+l(\lambda)+a-b$ and Lemma \ref{p}, we have 
\begin{align}
&\bar m_{(a+j,b,c+k)(\lambda'_0,\cdots,\lambda'_{r-1})(0,1,0)^{|\lambda'|+a+j-b}} \notag \\
=&\sum_{\substack{\mu\leq \lambda' \\ \mu_j=(1^{\beta_j^1},2^{\beta_j^2},\cdots)}}(\beta^{|\lambda|-|\mu|+a+j+P_{c+k}(\lambda',\mu)}_{Q_{c+k}(\lambda',\mu)}+1)\bar m_{(|\lambda'|-|\mu|+a+j+P_{c+k}(\lambda',\mu),0,Q_{c+k}(\lambda',\mu))(\mu_0,\cdots,\mu_{r-1})(0,1,0)^{|\lambda'|+a+j-b+P_{c+k}(\lambda',\mu)}} \notag \\
\times& \{\sum_{\substack{\mu\leq \nu \leq \lambda' \\|\nu|\leq |\lambda'|+a+j-b+P_{c+k}(\lambda',\mu)}} (-1)^{l(\nu)+l(\lambda')+b+P_{c+k}(\lambda',\mu)}f^{\mu}_{\nu}(a+j+P_{c+k}(\lambda',\mu)+|\lambda'|)\notag \\
&\times\frac{(l(\lambda')+|\lambda'|-l(\nu)-|\nu|+a+j-b+P_{c+k}(\lambda',\mu))!}{(|\lambda'|-|\nu|+a+j-b+P_{c+k}(\lambda',\mu))!\prod_{i,j}(\alpha^j_i-\gamma^j_i)!}\} \notag \\
=&\sum_{\substack{\mu\leq \lambda \\ \mu_j=(1^{\beta_j^1},2^{\beta_j^2},\cdots)}}(\beta^{|\lambda|-|\mu|+a+P_c(\lambda,\mu)}_{Q_c(\lambda,\mu)}+1)\bar m_{(|\lambda|-|\mu|+a+P_c(\lambda,\mu),0,Q_c(\lambda,\mu))(\mu_0,\cdots,\mu_{r-1})(0,1,0)^{|\lambda|+a-b+P_c(\lambda,\mu)}} \notag \\
\times& \{\sum_{\substack{\mu\leq \nu \leq \lambda-(j,0,i) \\ |\nu|\leq |\lambda|+a-b+P_c(\lambda,\mu)}} (-1)^{l(\nu)+l(\lambda)+b-1+P_c(\lambda,\mu)}\frac{f^{\mu}_{\nu}(a+P_c(\lambda,\mu)+|\lambda|)(l(\lambda)+|\lambda|-l(\nu)-|\nu|+a-b-1+P_c(\lambda,\mu))!}{(|\lambda|-|\nu|+a-b+P_c(\lambda,\mu))!(\alpha^j_i-\gamma^j_i-1)!\prod_{(i',j')\neq (i,j)}(\alpha^{j'}_{i'}-\gamma^{j'}_{i'})!} \}\notag 
\end{align}

Similarly, we have 
\begin{align}
&\bar m_{(a+j+1,b+1,c+k-r)(\lambda'_0,\cdots,\lambda'_{r-1})(0,1,0)^{|\lambda'|+a+j+1-b-1}} \notag \\
=&\sum_{\substack{\mu\leq \lambda \\ \mu_j=(1^{\beta_j^1},2^{\beta_j^2},\cdots)}}(\beta^{|\lambda|-|\mu|+a+P_c(\lambda,\mu)}_{Q_c(\lambda,\mu)}+1)\bar m_{(|\lambda|-|\mu|+a+P_c(\lambda,\mu),0,Q_c(\lambda,\mu))(\mu_0,\cdots,\mu_{r-1})(0,1,0)^{|\lambda|+a-b+P_c(\lambda,\mu)}} \notag \\
\times&\{\sum_{\substack{\mu\leq \nu \leq \lambda-(j,0,i) \\ |\nu|\leq |\lambda|+a-b+P_c(\lambda,\mu)}} (-1)^{l(\nu)+l(\lambda)+b-1+P_c(\lambda,\mu)}\frac{f^{\mu}_{\nu}(a+P_c(\lambda,\mu)+|\lambda|)(l(\lambda)+|\lambda|-l(\nu)-|\nu|+a-b-1+P_c(\lambda,\mu))!}{(|\lambda|-|\nu|+a-b+P_c(\lambda,\mu))!(\alpha^j_i-\gamma^j_i-1)!\prod_{(i',j')\neq (i,j)}(\alpha^{j'}_{i'}-\gamma^{j'}_{i'})!} \}. \notag 
\end{align}
Therefore, we have
\begin{align}
&\bar m_{(a,b,c)(\lambda_0,\cdots,\lambda_{r-1})(0,1,0)^{|\lambda|+a-b}} \notag\\
=&\sum_{\mu\leq \lambda}(\beta_{Q_c(\lambda,\mu)}^{|\lambda|-|\mu|+a+P_c(\lambda,\mu)}+1)\bar m_{(|\lambda|-|\mu|+k+P_c(\lambda,\mu),0,Q_c(\lambda,\mu))(\mu_0,\cdots,\mu_{r-1})(0,1,0)^{|\lambda|+a-b+P_c(\lambda,\mu)}}  \notag \\
&\times  \{\sum_{\substack{\mu\leq \nu \leq \lambda \\ |\nu|\leq |\lambda|+a-b-1+P_c(\lambda,\mu)}} (-1)^{l(\nu)+l(\lambda)+b+P_c(\lambda,\mu)}\frac{f^{\mu}_{\nu}(a+P_c(\lambda,\mu)+|\lambda|)(l(\lambda)+|\lambda|-l(\nu)-|\nu|+a-b-1+P_c(\lambda,\mu))!}{(|\lambda|-|\nu|+a-b-1+P_c(\lambda,\mu))!\prod_{i,j}(\alpha^j_i-\gamma^j_i)!}\notag \\
&+\sum_{i,j}\sum_{\substack{\mu\leq \nu \leq \lambda-(j,0,i) \\ |\nu|\leq |\lambda|+a-b+P_c(\lambda,\mu)}} (-1)^{l(\nu)+l(\lambda)+b+P_c(\lambda,\mu)}\frac{f^{\mu}_{\nu}(a+P_c(\lambda,\mu)+|\lambda|)(l(\lambda)+|\lambda|-l(\nu)-|\nu|+a-b-1+P_c(\lambda,\mu))!}{(|\lambda|-|\nu|+a-b+P_c(\lambda,\mu))!(\alpha^j_i-\gamma^j_i-1)!\prod_{(i',j')\neq (i,j)}(\alpha^{j'}_{i'}-\gamma^{j'}_{i'})!}\} \notag \\
=&\sum_{\mu\leq \lambda}(\beta^{|\lambda|-|\mu|+a+P_c(\lambda,\mu)}_{Q_c(\lambda,\mu)}+1)\bar m_{(|\lambda|-|\mu|+k+P_c(\lambda,\mu),0,Q_c(\lambda,\mu))(\mu_0,\cdots,\mu_{r-1})(0,1,0)^{|\lambda|+a-b+P_c(\lambda,\mu)}} \notag \\
&\times \{\sum_{\substack{\mu\leq \nu \leq \lambda \\|\nu|\leq |\lambda|+a-b+P_c(\lambda,\mu)}} (-1)^{l(\nu)+l(\lambda)+b+P_c(\lambda,\mu)}\frac{f^{\mu}_{\nu}(a+P_c(\lambda,\mu)+|\lambda|)(l(\lambda)+|\lambda|-l(\nu)-|\nu|+a-b+P_c(\lambda,\mu))!}{(|\lambda|-|\nu|+a-b+P_c(\lambda,\mu))!\prod_{i,j}(\alpha^j_i-\gamma^j_i)!}\}. \notag
\end{align}
where the last equation is using (3). 
\end{proof}

\begin{lem}\label{5}
We have
\begin{align}
&\bar m_{(a,a,c)}\bar m_{(\lambda_0,\cdots,\lambda_{r-1})(0,1,0)^{|\lambda|}} \notag \\
&=\sum_{\substack{P_c(\lambda,\mu)=0\\l(\lambda)-l(\mu)\leq a+1}}\frac{(-1)^{a}a!(a+1+(|\lambda|+|\mu|))(\beta_{Q_c(\lambda,\mu)}^{|\lambda|-|\mu|+a}+1)}{(a-(l(\lambda)-l(\mu))+1)!\prod_{i,j}(\alpha^j_i-\beta^j_i)!}m_{(|\lambda|-|\mu|+a,0,Q_c(\lambda,\mu))(\mu_0,\cdots,\mu_{r-1})(0,1,0)^{|\lambda|+a}} \notag
\end{align}
\end{lem}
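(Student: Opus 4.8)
The plan is to derive the identity from the single-symmetric-function formula of Lemma \ref{4} together with the multiplication rule of Lemma \ref{lem1}. The left-hand side is a genuine product of two basis vectors (the generator $\bar m_{(a,a,c)}$ times the basis element $\bar m_{(\lambda_0,\cdots,\lambda_{r-1})(0,1,0)^{|\lambda|}}$ of Lemma \ref{basis}), whereas Lemma \ref{4} computes a single symmetric function of the form $\bar m_{(a,b,c)(\lambda)(0,1,0)^{|\lambda|+a-b}}$; so the first task is to turn the product into a sum of such single functions.

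First I would expand the product by Lemma \ref{lem1} with $(a,b,c)$ specialized to $(a,a,c)$ and $\Lambda=(\lambda_0,\cdots,\lambda_{r-1})(0,1,0)^{|\lambda|}$. This produces the diagonal term $\bar m_{(a,a,c)(\lambda)(0,1,0)^{|\lambda|}}$, one contribution $\bar m_{(a,a+1,c)(\lambda)(0,1,0)^{|\lambda|-1}}$ coming from the $(0,1,0)$-parts, and contributions $\bar m_{(a+\lambda_k^j,a,c+k)(\cdots\lambda_k-j\cdots)(0,1,0)^{|\lambda|}}$ (when $c+k\le r-1$) and $\bar m_{(a+\lambda_k^j+1,a+1,c+k-r)(\cdots\lambda_k-j\cdots)(0,1,0)^{|\lambda|}}$ (when $c+k\ge r$) coming from the parts of $\lambda$. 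Each of these is of the shape to which Lemma \ref{4} applies; note that the $(0,1,0)$-contribution has second coordinate $a+1$ strictly larger than its first coordinate $a$, so I must invoke Lemma \ref{4} in the range $b>a$, which is legitimate since Lemma \ref{4} only requires $|\lambda|+a-b\ge 0$, and here the relevant exponent $|\lambda|-1$ is non-negative because a $(0,1,0)$-part was present.

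Before computing coefficients I would fix the support of the answer by the grading. The product is homogeneous of degree $2(ra+c+r|\lambda|+\sum_i il(\lambda_i))$, while a basis vector $\bar m_{(\nu_0,\cdots,\nu_{r-1})(0,1,0)^{|\nu|}}$ has degree $2(r|\nu|+\sum_i il(\nu_i))$. Writing the target vectors as $\bar m_{(|\lambda|-|\mu|+a,0,Q_c(\lambda,\mu))(\mu)(0,1,0)^{|\lambda|+a}}$, i.e.\ adjoining to $\mu$ one part of size $|\lambda|-|\mu|+a$ in slot $Q_c(\lambda,\mu)$, the degree equation collapses to $Q_c(\lambda,\mu)=c+\sum_i i(l(\lambda_i)-l(\mu_i))=rP_c(\lambda,\mu)+Q_c(\lambda,\mu)$, forcing $P_c(\lambda,\mu)=0$. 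This is exactly the restriction $P_c(\lambda,\mu)=0$ in the statement, and it explains why the exponent is $|\lambda|+a$ rather than $|\lambda|+a+P_c(\lambda,\mu)$.

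It remains to assemble the coefficient. Applying Lemma \ref{4} to each term of the expansion turns the sum into a double sum over $\mu\le\lambda$ and an auxiliary $\nu$ with $\mu\le\nu\le\lambda$; the inner $\nu$-sums are alternating sums of the type evaluated in Lemma \ref{3}, which collapse each single-function contribution to a closed ratio of factorials such as $\tfrac{a!}{(a-(l(\lambda)-l(\mu)))!\prod_{i,j}(\alpha^j_i-\beta^j_i)!}$, while the telescoping identity of Lemma \ref{2} controls how the diagonal term and the $(0,1,0)$-correction (evaluated at shifted arguments $a+|\lambda|$ versus $a+|\lambda|-1$) recombine. The main obstacle is precisely this bookkeeping: one must check that after summing the diagonal contribution against the $b=a+1$ correction and the $\lambda$-part corrections, the factorials reorganize by a Pascal-type identity into $\tfrac{a!\,(a+1+|\lambda|+|\mu|)}{(a-(l(\lambda)-l(\mu))+1)!\prod_{i,j}(\alpha^j_i-\beta^j_i)!}$, producing the extra linear factor $a+1+|\lambda|+|\mu|$, the shifted denominator, and the sign $(-1)^a$. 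I would calibrate the normalization on the base cases first (the case $\lambda=\varnothing$, where both sides reduce to $\bar m_{(a,a,c)}=(-1)^a\bar m_{(a,0,c)(0,1,0)^a}$, and the case $l(\lambda)=1$), and then carry out the general collapse by induction on $l(\lambda)$ in parallel with the induction already used for Lemma \ref{4}.
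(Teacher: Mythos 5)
Your skeleton---expand the product by Lemma \ref{lem1}, apply Lemma \ref{4} to each of the four resulting single symmetric functions (including the $(0,1,0)$-term, where $b=a+1>a$ is indeed legitimate), then recombine---is exactly the paper's route. The genuine gap is your third step: the grading argument does not, and cannot, force $P_c(\lambda,\mu)=0$. What Lemma \ref{4} feeds into the recombination are vectors of the form $\bar m_{(|\lambda|-|\mu|+a+P_c(\lambda,\mu),0,Q_c(\lambda,\mu))(\mu_0,\cdots,\mu_{r-1})(0,1,0)^{|\lambda|+a+P_c(\lambda,\mu)}}$, i.e.\ with $P_c$ inserted both in the added part and in the exponent, and \emph{every} one of these, for every $\mu\le\lambda$ whatever the value of $P_c(\lambda,\mu)$, is homogeneous of exactly the degree of the product: using $rP_c(\lambda,\mu)+Q_c(\lambda,\mu)=c+\sum_i i(l(\lambda_i)-l(\mu_i))$, its half-degree is $r(|\lambda|+a+P_c)+\sum_i il(\mu_i)+Q_c=r(|\lambda|+a)+c+\sum_i il(\lambda_i)$ identically. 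So homogeneity places no restriction whatsoever on which $\mu$ can occur. Your computation only shows that vectors of the \emph{pre-imposed} shape ``added part $|\lambda|-|\mu|+a$, exponent $|\lambda|+a$'' have the right degree precisely when $P_c=0$; that is a statement about how the lemma happens to be written, not about the expansion, so the argument is circular.

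The restriction $P_c(\lambda,\mu)=0$ has to come out of the coefficient cancellation, and it is inseparable from the computation of the coefficient itself; the ``support first, coefficients later'' division does not work here. In the paper, after Lemma \ref{4} is applied to all four groups and the $\lambda$-part corrections are absorbed using the multinomial identity (3), the coefficient of the $\mu$-term becomes a difference of two $\nu$-sums with \emph{identical} summands over the nested ranges $|\nu|\le|\lambda|+P_c(\lambda,\mu)$ and $|\nu|\le|\lambda|-1+P_c(\lambda,\mu)$; only the boundary $|\nu|=|\lambda|+P_c(\lambda,\mu)$ survives, and since $\nu\le\lambda$ forces $|\nu|\le|\lambda|$, a nonzero contribution requires $P_c(\lambda,\mu)=0$ and then $\nu=\lambda$, which evaluates the brace to $(-1)^af^{\mu}_{\lambda}(a+|\lambda|)$---simultaneously the support condition and the stated closed-form coefficient. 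Your step 4 gestures at ``bookkeeping'' with Lemma \ref{2}, Lemma \ref{3} and a Pascal-type identity, but it never identifies this boundary-term mechanism; worse, having already (incorrectly) discarded the $P_c\neq0$ terms by degree reasons, your plan contains no step at which their coefficients would ever be shown to vanish, and that vanishing is the actual content of the lemma beyond Lemma \ref{4}.
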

\begin{proof}
By Lemma \ref{lem1} and Lemma \ref{4}, we have
\begin{align}
&\bar m_{(a,a,c)}\bar m_{(\lambda_0,\cdots,\lambda_{r-1})(0,1,0)^{ |\lambda|}} \notag \\
=&\bar m_{(a,a,c)(\lambda_0,\cdots,\lambda_{r-1}(0,1,0)^{ |\lambda|})}+\bar m_{(a,a+1,c)(\lambda_0,\cdots,\lambda_{r-1}(0,1,0)^{ |\lambda|-1})} \notag \\
&+\sum_{\substack{\alpha^j_k>0 \\c+k\leq r-1}}\bar m_{(a+j,a,c+k)(\lambda_0,\cdots,\lambda_k-j,\cdots,\lambda_{r-1})(0,1,0)^{|\lambda|}}  \notag \\
&+\sum_{\substack{\alpha^j_k>0 \\ c+k\geq r-1}}\bar m_{(a+j+1,a+1,c+k-r)(\lambda_0,\cdots,\lambda_k-j,\cdots,\lambda_{r-1})(0,1,0)^{|\lambda|}}. \notag  \\
=&\sum_{\substack{\mu \\ l(\lambda)-l(\mu)\leq a+1+P_c(\lambda,\mu)}}(\beta_{Q_c(\lambda,\mu)}^{|\lambda|-|\mu|+a+P_c(\lambda,\mu)}+1)\bar m_{( |\lambda|-|\mu|+a+P_c(\lambda,\mu),0 ,Q_c(\lambda, \mu))(\mu_0,\cdots,\mu_{r-1})(0,1,0)^{|\lambda|+a+P_c(\lambda,\mu)}} \notag \\
&\times \{\sum_{\substack{\mu\leq \nu \leq \lambda \\|\nu|\leq |\lambda|+P_c(\lambda,\mu)}} (-1)^{l(\nu)+l(\lambda)+a+P_c(\lambda,\mu)}\frac{f^{\mu}_{\nu}(a+P_c(\lambda,\mu)+|\lambda|)(l(\lambda)+|\lambda|-l(\nu)-|\nu|+P_c(\lambda,\mu))!}{(|\lambda|-|\nu|+P_c(\lambda,\mu))!\prod_{i,j}(\alpha^j_i-\gamma^j_i)!} \notag \\
&-\sum_{\substack{\mu\leq \nu \leq \lambda \\|\nu|\leq |\lambda|-1+P_c(\lambda,\mu)}} (-1)^{l(\nu)+l(\lambda)+a+P_c(\lambda,\mu)}\frac{f^{\mu}_{\nu}(a+P_c(\lambda,\mu)+|\lambda|)(l(\lambda)+|\lambda|-l(\nu)-|\nu|-1+P_c(\lambda,\mu))!}{(|\lambda|-|\nu|-1+P_c(\lambda,\mu))!\prod_{i,j}(\alpha^j_i-\gamma^j_i)!} \notag \\
&-\sum_{i,j}\sum_{\substack{\mu\leq \nu \leq \lambda-(j,0,i) \\|\nu|\leq |\lambda|+P_c(\lambda,\mu)}} (-1)^{l(\nu)+l(\lambda)+a+P_c(\lambda,\mu)}\frac{f^{\mu}_{\nu}(a+P_c(\lambda,\mu)+|\lambda|)(l(\lambda)+|\lambda|-l(\nu)-|\nu|+P_c(\lambda,\mu)-1)!}{(|\lambda|-|\nu|+P_c(\lambda,\mu))!(\alpha^j_i-\gamma^j_i-1)!\prod_{(i,j)\neq(i',j')}(\alpha^{j'}_{i'}-\gamma^{j'}_{i'})!}\} \notag \\
=&\sum_{\substack{\mu \\ l(\lambda)-l(\mu)\leq a+1+P_c(\lambda,\mu)}}(\beta_{Q_c(\lambda,\mu)}^{|\lambda|-|\mu|+a+P_c(\lambda,\mu)}+1)\bar m_{( |\lambda|-|\mu|+a+P_c(\lambda,\mu),0 ,Q_c(\lambda, \mu))(\mu_0,\cdots,\mu_{r-1})(0,1,0)^{|\lambda|+a+P_c(\lambda,\mu)}} \notag \\
&\times \{\sum_{\substack{\mu\leq \nu \leq \lambda \\|\nu|\leq |\lambda|+P_c(\lambda,\mu)}} (-1)^{l(\nu)+l(\lambda)+a+P_c(\lambda,\mu)}\frac{f^{\mu}_{\nu}(a+P_c(\lambda,\mu)+|\lambda|)(l(\lambda)+|\lambda|-l(\nu)-|\nu|+P_c(\lambda,\mu))!}{(|\lambda|-|\nu|+P_c(\lambda,\mu))!\prod_{i,j}(\alpha^j_i-\gamma^j_i)!} \notag \\
&-\sum_{\substack{\mu\leq \nu \leq \lambda \\|\nu|\leq |\lambda|-1+P_c(\lambda,\mu)}} (-1)^{l(\nu)+l(\lambda)+a+P_c(\lambda,\mu)}\frac{f^{\mu}_{\nu}(a+P_c(\lambda,\mu)+|\lambda|)(l(\lambda)+|\lambda|-l(\nu)-|\nu|+P_c(\lambda,\mu))!}{(|\lambda|-|\nu|+P_c(\lambda,\mu))!\prod_{i,j}(\alpha^j_i-\gamma^j_i)!} \}\notag \\
=&\sum_{\substack{P_c(\lambda,\mu)=0\\l(\lambda)-l(\mu)\leq a+1}}\frac{(-1)^{a}a!(a+1+(|\lambda|+|\mu|))(\beta_{Q_c(\lambda,\mu)}^{|\lambda|-|\mu|+a}+1)}{(a-(l(\lambda)-l(\mu))+1)!\prod_{i,j}(\alpha^j_i-\beta^j_i)!}m_{(|\lambda|-|\mu|+a+P_c(\lambda,\mu),0,Q_c(\lambda,\mu))(\mu_0,\cdots,\mu_{r-1})(0,1,0)^{|\lambda|+a}}.  \notag
\end{align}
\end{proof}
\begin{cor}\label{cor}
	The symmetric function $\bar m_{(a,a,c)}m_{(\lambda_0,\cdots,\lambda_{r-1})(0,1,0)^{|\lambda|}}$ is contained in the subspace spanned by $\{\bar m_{(\nu_0,\cdots,\nu_{r-1})(0,1,0)^{|\nu|}}\mid  |\lambda|+l(\lambda)\leq  |\nu|+l(\nu)\}$. 
\end{cor}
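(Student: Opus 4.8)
The plan is to deduce the corollary directly from Lemma~\ref{5}, which already rewrites $\bar m_{(a,a,c)}\bar m_{(\lambda_0,\cdots,\lambda_{r-1})(0,1,0)^{|\lambda|}}$ as an explicit finite linear combination of symmetric functions
\[
m_{(|\lambda|-|\mu|+a,\,0,\,Q_c(\lambda,\mu))(\mu_0,\cdots,\mu_{r-1})(0,1,0)^{|\lambda|+a}},
\]
where $\mu$ runs over tri-partitions with $\mu\le\lambda$, $P_c(\lambda,\mu)=0$, and $l(\lambda)-l(\mu)\le a+1$. Since all the algebraic work is contained in Lemma~\ref{5}, what remains is purely combinatorial: to recognize each summand as one of the basis vectors $\bar m_{(\nu_0,\cdots,\nu_{r-1})(0,1,0)^{|\nu|}}$ of Lemma~\ref{basis} and to verify the inequality $|\lambda|+l(\lambda)\le|\nu|+l(\nu)$.

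First I would absorb the leading factor $(|\lambda|-|\mu|+a,0,Q_c(\lambda,\mu))$ into $\mu$. Because this part has zero $y$-coordinate and $z$-coordinate $Q_c(\lambda,\mu)\in\{0,\dots,r-1\}$, it is exactly the contribution of adjoining a row of length $|\lambda|-|\mu|+a$ to the component $\mu_{Q_c(\lambda,\mu)}$; writing $\nu$ for the resulting tri-partition, the summand becomes $\bar m_{(\nu_0,\cdots,\nu_{r-1})(0,1,0)^{|\lambda|+a}}$. I would then record the bookkeeping $|\nu|=|\mu|+(|\lambda|-|\mu|+a)=|\lambda|+a$ (so that $(0,1,0)^{|\lambda|+a}=(0,1,0)^{|\nu|}$ and the summand is genuinely of the required shape) together with $l(\nu)=l(\mu)+1$. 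The bound is then immediate: $|\nu|+l(\nu)=|\lambda|+a+l(\mu)+1$, while the summation constraint $l(\lambda)-l(\mu)\le a+1$ rearranges to $l(\lambda)\le a+l(\mu)+1$, whence $|\lambda|+l(\lambda)\le|\lambda|+a+l(\mu)+1=|\nu|+l(\nu)$. As a consistency check I would confirm homogeneity from $\deg\bar m_{(a,a,c)}=2(ra+c)$ and the degree formula of Lemma~\ref{basis}, which collapses precisely to the defining relation $\sum_i i(l(\lambda_i)-l(\mu_i))+c=Q_c(\lambda,\mu)$ valid when $P_c(\lambda,\mu)=0$.

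I expect the main (and essentially only) obstacle to be the index translation at its boundary, rather than any analysis. The delicate point is the degenerate summand with $a=0$ and $\mu=\lambda$, where the leading part $(|\lambda|-|\mu|+a,0,Q_c(\lambda,\mu))$ reduces to $(0,0,c)$ and no longer adjoins an honest row; here I would treat $\bar m_{(0,0,c)(\lambda_0,\cdots,\lambda_{r-1})(0,1,0)^{|\lambda|}}$ separately, viewing it as carrying one extra unit of length while keeping $|\nu|=|\lambda|$, so that $|\nu|+l(\nu)=|\lambda|+l(\lambda)+1\ge|\lambda|+l(\lambda)$ and the asserted inclusion still holds. Once this boundary case is dispatched, summing over all admissible $\mu$ shows that the product lies in the span of $\{\bar m_{(\nu_0,\cdots,\nu_{r-1})(0,1,0)^{|\nu|}}\mid|\lambda|+l(\lambda)\le|\nu|+l(\nu)\}$, which is exactly the claim.
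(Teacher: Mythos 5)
Your argument coincides with the paper's own proof: the paper likewise starts from Lemma \ref{5}, absorbs the leading part $(|\lambda|-|\mu|+a,0,Q_c(\lambda,\mu))$ into $\mu$ by setting $\nu_{Q_c(\lambda,\mu)}=(|\lambda|-|\mu|+a)\cup\mu_{Q_c(\lambda,\mu)}$, and obtains $|\nu|+l(\nu)=|\lambda|+a+l(\mu)+1\geq|\lambda|+l(\lambda)$ from the summation constraint $l(\lambda)-l(\mu)\leq a+1$, exactly as you do. Your separate treatment of the degenerate summand ($a=0$, $\mu=\lambda$, leading part $(0,0,c)$) flags a boundary case the paper passes over in silence; this is a real subtlety (such terms are only of the basis form if one allows parts of size $0$ in the components $\lambda_c$ with $c\geq 1$), but it does not alter the approach.
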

\begin{proof}
By Lemma \ref{5}, we have 
\begin{align}
&\bar m_{(a,a,c)}\bar m_{(\lambda_0,\cdots,\lambda_{r-1})(0,1,0)^{|\lambda|}} \notag \\
&=\sum_{l(\lambda)-l(\mu)\leq a+1}\frac{(-1)^{a}a!(a+1+(|\lambda|+|\mu|))(\beta_{Q_c(\lambda,\mu)}^{|\lambda|-|\mu|+a}+1)}{(a-(l(\lambda)-l(\mu))+1)!\prod_{i,j}(\alpha^j_i-\beta^j_i)!}m_{(|\lambda|-|\mu|+a,0,Q_c(\lambda,\mu))(\mu_0,\cdots,\mu_{r-1})(0,1,0)^{|\lambda|+a}}. \notag
\end{align}
Set $\nu_0=\mu_0,\cdots,\nu_{Q_c(\lambda,\mu)}=(|\lambda|-|\mu|+a+P_c(\lambda,\mu))\cup\mu_{Q_c(\lambda,\mu)},\cdots,\nu_{r-1}=\mu_{r-1}$,  we get 
\begin{align}
|\nu|+l(\nu)&=(|\lambda|-|\mu|+a)+|\mu|+(l(\mu)+1)\notag \\
&=|\lambda|+l(\mu)+a+1 \notag \\
&\geq |\lambda|+l(\lambda). \notag 
\end{align}
\end{proof}
\begin{proof}[Proof of Theorem 3]
We enough to show that the symmetric function $m_{\Lambda}$ that $l(\Lambda)>n$ is contained in the subspace $\langle \bar m_{(\lambda_0,\cdots,\lambda_{r-1})(0,1,0)^{|\lambda|}}\mid 
|\lambda|+l(\lambda)\geq l(\Lambda)\rangle $. 
Suppose $l(\Lambda)>n$. 
We set $\deg (\bar m_{\Lambda})=2d(\Lambda)$ and $e(\Lambda)$ is the number of $(0,1,0)$ in $\Lambda$. We prove this theorem by the induction of $d(\Lambda)-re(\Lambda)$. If $d(\Lambda)-re(\Lambda)=0$, we have $\Lambda-(0,1,0)^{e(\Lambda)}=(a_1,0,c_1)(a_2,0,c_2)\cdots$. Since $l(\Lambda)>n$, this case is clear. 
If $d(\Lambda)-re(\Lambda)>0$, there exists $(a,b,c)\neq (0,1,0)$ in $\Lambda$. Set $\Lambda=\Lambda'(a,b,c)$. By Lemma \ref{lem1}, we have 
\begin{align}
\bar m_{(a,b-1,c)}\bar m_{(0,1,0)\Lambda'}&=c_0m_{(a,b-1,c)(0,1,0)\Lambda'}+c_1\bar m_{\Lambda} \notag \\
&+\sum_{\substack{e(\Lambda'')=e(\Lambda)\\ l(\Lambda'')=l(\Lambda)-1}}c_{\Lambda''}\bar m_{(0,1,0)\Lambda''}, \notag
\end{align}
for $c_1\neq 0$. \\
By the hypothesis of induction and 
\[d((a,b-1,c)(0,1,0)\Lambda')-re((a,b-1,c)(0,1,0)\Lambda')=d((0,1,0)\Lambda'')-re((0,1,0)\Lambda'')=d(\Lambda)-r(e(\Lambda)+1), \]
we have $\bar m_{(a,b-1,c)(0,1,0)\Lambda'}$, $\bar m_{(0,1,0)\Lambda''}\in \langle \bar m_{(\lambda_0,\cdots,\lambda_{r-1})(0,1,0)^{|\lambda|}}\mid 
 |\lambda|+l(\lambda)\geq l(\Lambda)\rangle$. \\
If $a\neq b-1$, $m_{(a,b-1,c)=0}$. Therefore, $\bar m_{\Lambda}\in\langle \bar m_{(\lambda_0,\cdots,\lambda_{r-1})(0,1,0)^{|\lambda|}}\mid 
 |\lambda|+l(\lambda)\geq l(\Lambda)\rangle$. \\
If $a=b-1$, we have 
\[d((0,1,0)\Lambda')-re((0,1,0)\Lambda')=d(\Lambda)-ra-re(\Lambda)-r.\]
By the hypothesis of induction and $l(\Lambda)=l((0,1,0)\Lambda')$, $m_{(0,1,0)\Lambda'}\in\langle m_{(\lambda_0,\cdots,\lambda_{r-1})(0,1,0)^{|\lambda|}}\mid 
 |\lambda|+l(\lambda)\geq l(\Lambda)\rangle$. By Corollary \ref{cor}, $\bar m_{(a,a,c)}\bar m_{(0,1,0)\Lambda'}\in\langle \bar m_{(\lambda_0,\cdots,\lambda_{r-1})(0,1,0)^{|\lambda|}}\mid 
|\lambda|+l(\lambda)\geq l(\Lambda)\rangle$. 
Hence, the proof is complete. 
\end{proof}

\begin{proof}[Proof of Theorem \ref{hatano}]
By Theorem \ref{vec}, the $2k$-th component dimension of $\mathbb{C}[Z_n^{\mathbb{T}}]$ is equal to $\{(\lambda_0,\cdots,\lambda_{r-1})\mid \sum_i|\lambda_i|=n, \,\sum_i r|\lambda_i|+il(\lambda_i)=k\}$. On the other hand, By \cite[Theorem 3.8]{nak}, the dimension of $H^{2k}(M_{r,n})$ is equal to  $\{(\lambda_0,\cdots,\lambda_{r-1})\mid \sum_i |\lambda_i|=n, \,\sum_i r|\lambda_i|+il(\lambda)=k\}$.

\end{proof}

\end{document}